\documentclass[12pt, reqno]{amsart}
\usepackage[utf8]{inputenc}
\usepackage{indentfirst}
\usepackage{amsmath, amssymb, amsthm, mathrsfs}
\usepackage{setspace}
\usepackage{wasysym}
\usepackage{enumerate}
\usepackage[colorlinks=true, linkcolor=purple, citecolor=blue, urlcolor=magenta]{hyperref}
\usepackage[table]{xcolor}
\usepackage{graphicx}
\usepackage{tikz}
\usepackage{float}
\usepackage{booktabs}
\newtheorem{theo}{Theorem}[section]
\newtheorem{lem}{Lemma}[section]

\numberwithin{equation}{section}

\begin{document}

\title[Higher-Order Schippers-Schwarzian Derivatives]{Higher-Order Schippers-Schwarzian Derivatives for Non-Circular Starlike Functions}

\author[P. Das, P. Zaprawa and N. Sarkar]{Pradip Das, Pawe\l{} Zaprawa and Nabadwip Sarkar}

\address{Department of Mathematics, Raiganj University, Raiganj, West Bengal-733134, India.}
\email{pradipsmath@gmail.com}

\address{Lublin University of Technology, Nadbystrzycka 38D, Lublin 20-618, Poland.}
\email{p.zaprawa@pollub.pl}

\address{Amity School of Applied Sciences, Amity University Mumbai, Panvel, Navi Mumbai, Maharashtra-410206, India.}
\email{nsarkar@mum.amity.edu}

\renewcommand{\thefootnote}{}
\footnote{\emph{2020 Mathematics Subject Classification:} 30C45, 30C50, 30C80.}
\footnote{\emph{Key words and phrases:} Univalent functions; higher-order Schwarzian derivatives; subclasses of starlike functions; cardioid and petal-shaped domains; Grunsky coefficients.}
\footnote{*\emph{Corresponding Author}: Pawe\l{} Zaprawa.}
\renewcommand{\thefootnote}{\arabic{footnote}}
\setcounter{footnote}{0}

\begin{abstract}
We find sharp upper bounds for the initial higher-order Schippers-Schwarzian derivatives $\sigma_3(f)(0)$ and $\sigma_4(f)(0)$ for several subclasses of univalent and starlike functions in the unit disk. The functions in these classes are defined by subordination to domains bounded by an exponential-sine curve, a cardioid, or a petal-shaped curve. We determine the sharp bounds and find the corresponding extremal functions for each invariant in these subclasses. As an application, we also obtain sharp bounds for the initial Grunsky coefficients $\omega_{1,1}$ and $\omega_{1,2}$.
\end{abstract}

\maketitle

\section{Introduction}

For a locally univalent function $f$ defined on the unit disk $\mathbb{D} = \{z \in \mathbb{C} : |z| < 1\}$, the classical Schwarzian derivative $S_f$ is defined as
\[
S_f = \left(\frac{f''}{f'}\right)' - \frac{1}{2}\left(\frac{f''}{f'}\right)^2.
\]
This operator is invariant under M\"{o}bius transformations. Specifically, the identity $S_{L\circ f} = S_f$ holds for any linear fractional transformation $L(z) = \frac{az+b}{cz+d}$ with $ad - bc \neq 0$. Moreover, $S_f = 0$ if and only if $f$ is a M\"{o}bius transformation. A theorem of Nehari \cite{Nehari} states that if $f$ is univalent in $\mathbb{D}$, then
\[
\left\|S_f\right\| = \sup_{z \in \mathbb{D}} |S_f(z)|(1 - |z|^2)^2 \le 6.
\]
Bounds of this type have been studied for various subclasses of univalent functions (see \cite{KanasSugawa, Schippers}).

Higher-order generalizations of the Schwarzian derivative have been introduced by several authors (see \cite{Harmelin, HuSrivastavaZhang, Tamanoi}). In particular, Schippers \cite{Schippers} defined the higher-order Schwarzian derivatives $\sigma_n(f)$ recursively for $n \ge 3$ by
\[
\sigma_{n+1}(f) = \sigma_n'(f) - (n-1)\sigma_n(f)\frac{f''}{f'},
\]
where $\sigma_3(f) = S_f$. These operators are related to the Grunsky coefficients, which provide necessary and sufficient conditions for univalence.

Let $\mathcal{H}$ denote the class of analytic functions in $\mathbb{D}$, and let $\mathcal{A} = \{f \in \mathcal{H} : f(0) = 0, \, f'(0) = 1\}$. Let $\mathcal{S} \subset \mathcal{A}$ be the class of univalent functions. Each $f \in \mathcal{S}$ can be written as
\begin{equation}\label{eq1}
f(z) = z + \sum_{n=2}^{\infty} a_n z^n.
\end{equation}
By direct computation using \eqref{eq1}, we have
\[
\frac{f''(z)}{f'(z)} = 2a_2 + (6a_3 - 4a_2^2)z + (12a_4 - 18a_2 a_3 + 8a_2^3)z^2 + \cdots.
\]
The values of the initial higher-order Schwarzian derivatives at the origin are given by
\begin{equation}\label{sg3}
\sigma_3(f)(0) = 6(a_3 - a_2^2),
\end{equation}
\begin{equation}\label{sg4}
\sigma_4(f)(0) = 24(a_4 - 3a_2 a_3 + 2a_2^3).
\end{equation}

The values $|\sigma_n(f)(0)|$ for $n=3,4$ have been determined for several subclasses of univalent functions. For instance, Dorff and Szynal \cite{DorffSzynal} found these bounds for the class $\mathcal{C}$ of convex functions. Cho et al. \cite{ChoKumarRavichandran} extended these results to subclasses defined by Janowski subordination, namely
\[
\frac{z f'(z)}{f(z)} \prec \frac{1 + A z}{1 + B z} \quad \text{and} \quad 1 + \frac{z f''(z)}{f'(z)} \prec \frac{1 + A z}{1 + B z},
\]
where $-1 \le B < A \le 1$. We recall that $f$ is starlike ($\mathcal{S}^*$) or convex ($\mathcal{C}$) if $\Re(z f'(z)/f(z)) > 0$ or $\Re(1 + z f''(z)/f'(z)) > 0$ for $z \in \mathbb{D}$, respectively. Bounds for other subclasses have been obtained by Kumar et al. \cite{KumarChoRavichandranSrivastava} and Hu et al. \cite{HuWangFan}.

For two analytic functions $f$ and $g$ in $\mathbb{D}$, $f$ is subordinate to $g$, written $f \prec g$, if there exists an analytic function $\omega$ in $\mathbb{D}$ with $\omega(0)=0$ and $|\omega(z)|<1$ such that $f(z) = g(\omega(z))$.

This paper extends these results to subclasses associated with cardioid, petal-shaped, and exponential-sine subordinant domains. We determine sharp upper bounds for $|\sigma_3(f)(0)|$ and $|\sigma_4(f)(0)|$ for the following three subclasses of functions:
\begin{enumerate}
\item \textbf{The class $\mathcal{P}^{*}$:} Defined by the condition \cite{TAH}:
\[
f'(z) \prec e^z(1+\sin z).
\]
\item \textbf{The class $\mathcal{S}^{*}_{\mathrm{car}}$:} Defined by the condition \cite{SS}:
\[
\frac{z f'(z)}{f(z)} \prec 1 + \frac{4}{3}z + \frac{2}{3}z^2.
\]
\item \textbf{The class $\mathcal{S}^*_{\rho}$:} Defined by the condition \cite{Arora-Kumar-2022}:
\[
\frac{z f'(z)}{f(z)} \prec 1 + \sinh^{-1}(z).
\]
\end{enumerate}

The geometric characteristics of these subclasses are determined by the image domains of their corresponding subordinating functions. For the convenience of the reader, the associated image domains are illustrated in Figure~\ref{fig:domains}.

\begin{figure}[htbp]
    \centering
    \includegraphics[width=\textwidth]{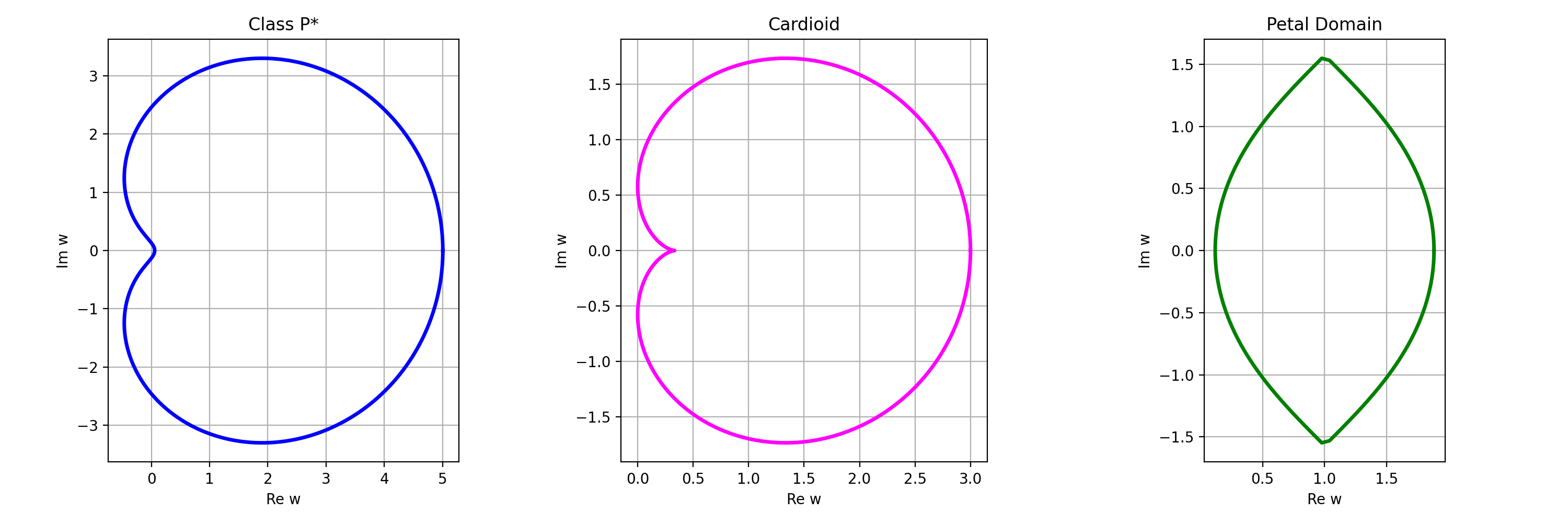}
    \caption{Image domains of the subordinating functions corresponding to (a) the class $P^{*}$, (b) the cardioid domain, and (c) the petal domain.}
    \label{fig:domains}
\end{figure}
The main aim of this paper is to find sharp upper bounds for $|\sigma_3(f)(0)|$ and $|\sigma_4(f)(0)|$ for these three classes, and to determine the extremal functions. By using a coefficient lemma of Prokhorov and Szynal \cite{Prokhorov}, we simplify the coefficient calculations and obtain the extremal functions. As an application, we also derive sharp bounds for the initial Grunsky coefficients $\omega_{1,1}$ and $\omega_{1,2}$.

The geometric behavior of a univalent function depends strongly on its derivative operators. The bounds obtained in this paper have important geometric meaning: 

The Schwarzian derivative $\sigma_3(f)$ measures how much a conformal map differs from a M\"{o}bius transformation. The higher-order derivatives $\sigma_4(f)$ describe more detailed local changes. Our sharp bounds show how cardioid, petal, and exponential-sine domains control twisting and stretching near the center of the unit disk.

Subordination to cardioid or petal-shaped domains restricts expressions such as $zf'(z)/f(z)$. Geometrically, this controls the shape of the boundary and prevents sharp distortions. The bounds on $\sigma_n(f)(0)$ give limits on the local curvature. This shows that non-circular domains restrict function growth more strongly than circular domains. 

The Schippers--Schwarzian operators are closely related to the first Grunsky coefficients $(\omega_{1,1}, \omega_{1,2})$. Our results give sharp bounds connected with the classical Area Theorem. They explain how area is distributed among the initial Taylor coefficients and describe stability through the invariant quantity $\mathcal{E}_4(f)$.

\section{Auxiliary lemmas}

Let $\Omega$ denote the class of Schwarz functions $\omega$ analytic in $\mathbb{D}$ satisfying $\omega(0)=0$ and $|\omega(z)| < 1$, with the Taylor series representation:
\begin{equation}\label{zp1}
\omega(z) = \sum_{n=1}^{\infty} c_n z^n = c_1 z + c_2 z^2 + c_3 z^3 + \dots, \quad z \in \mathbb{D}.
\end{equation}

The following lemmas are required to establish our results. First, we state a result by Ma and Minda \cite{MM} for a second-order coefficient functional over the Carath\'{e}odory class $\mathcal{P}$.

\begin{lem}\label{lemma1}
Let $p \in \mathcal{P}$ be given by $p(z) = 1 + \sum_{n=1}^{\infty} b_n z^n$ with $\Re p(z) > 0$ in $\mathbb{D}$. Then
\[
\left| b_2 - v b_1^2 \right| \le 
\begin{cases}
-4v + 2, & v < 0, \\
2, & 0 \le v \le 1, \\
4v - 2, & v > 1.
\end{cases}
\]
For $v < 0$ or $v > 1$, equality holds if and only if $h(z) = \frac{1+z}{1-z}$ or one of its rotations. For $0 < v < 1$, equality holds if and only if $h(z) = \frac{1+z^2}{1-z^2}$ or one of its rotations.
\end{lem}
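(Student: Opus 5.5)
We will prove the Ma--Minda inequality of Lemma~\ref{lemma1} by reducing it to two classical facts: the sharp coefficient bound $|b_n|\le 2$ for $p\in\mathcal{P}$, and the identity relating $b_2$ to the Schwarz function. Write $p=(1+\omega)/(1-\omega)$ with $\omega\in\Omega$ as in \eqref{zp1}. Expanding,
\[
p(z)=1+2\omega+2\omega^2+\cdots = 1+2c_1z+(2c_2+2c_1^2)z^2+\cdots,
\]
so $b_1=2c_1$ and $b_2=2c_2+2c_1^2$. Hence
\[
b_2-vb_1^2 = 2\bigl(c_2-(2v-1)c_1^2\bigr).
\]
The key input is the Schwarz-function estimate $|c_2|\le 1-|c_1|^2$, which follows from Schwarz's lemma applied to $\omega(z)/z$ (or from Schur's algorithm). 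Then
\[
|b_2-vb_1^2|\le 2\bigl(1-|c_1|^2+|2v-1|\,|c_1|^2\bigr)=2+2\bigl(|2v-1|-1\bigr)|c_1|^2 .
\]

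We then split according to the sign of $|2v-1|-1$, with $|c_1|\le 1$.
\begin{itemize}
\item If $0\le v\le 1$, then $|2v-1|\le 1$, the coefficient is nonpositive, and the bound is $2$.
\item If $v>1$, the coefficient is positive, so we take $|c_1|=1$ and get $2|2v-1|=4v-2$.
\item If $v<0$, likewise we get $2(1-2v)=-4v+2$.
\end{itemize}
This gives exactly the three cases of the statement.

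For the equality statements we trace when each inequality is tight.
\begin{itemize}
\item For $v>1$ or $v<0$, equality forces $|c_1|=1$. By Schwarz's lemma this means $\omega(z)=e^{i\theta}z$, so $p$ is a rotation of $(1+z)/(1-z)$.
\item For $0<v<1$, we have $|2v-1|<1$, so equality forces $c_1=0$ and $|c_2|=1$. Then $\omega(z)=e^{i\theta}z^2$, and $p$ is a rotation of $(1+z^2)/(1-z^2)$.
\end{itemize}
Note that the statement's $h$ should be read as $p$.

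Direct substitution confirms that these functions attain the bounds. For $(1+z)/(1-z)$ we have $b_1=b_2=2$, so $|b_2-vb_1^2|=|2-4v|$. For $(1+z^2)/(1-z^2)$ we have $b_1=0$ and $b_2=2$.

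The main obstacle is justifying $|c_2|\le 1-|c_1|^2$ together with its equality case. We handle it by applying Schwarz--Pick to $g(z)=\omega(z)/z$, a self-map of $\overline{\mathbb{D}}$ with $g(0)=c_1$ and $g'(0)=c_2$. This gives $|g'(0)|\le 1-|g(0)|^2$, with equality iff $g$ is a disk automorphism. At the endpoints $v=0,1$ both extremal families occur, which is why the statement lists the equality cases only for the open ranges.
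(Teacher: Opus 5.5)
The paper contains no proof of this lemma. It is quoted from Ma and Minda \cite{MM} and used as a black box, so your argument supplies something the paper omits rather than competing with an in-paper proof.

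Your proof is correct, both for the inequality and for the two ``if and only if'' equality statements. It is the standard argument written in Schwarz-function coordinates. Since $b_2-\tfrac12 b_1^2=2c_2$ and $2-\tfrac12|b_1|^2=2(1-|c_1|^2)$, your key estimate $|c_2|\le 1-|c_1|^2$ is exactly the classical inequality $\bigl|b_2-\tfrac12 b_1^2\bigr|\le 2-\tfrac12|b_1|^2$ on $\mathcal{P}$. Likewise, your identity $b_2-vb_1^2=2\bigl(c_2-(2v-1)c_1^2\bigr)$ is the split $b_2-vb_1^2=\bigl(b_2-\tfrac12 b_1^2\bigr)+\bigl(\tfrac12-v\bigr)b_1^2$. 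The advantage of your coordinates is that the equality cases reduce to the ordinary Schwarz lemma ($|c_1|=1$, or $c_1=0$ and $|c_2|=1$).

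Three small corrections:
\begin{enumerate}
\item Your opening sentence advertises $|b_n|\le 2$, but the inputs you actually use are $|c_1|\le 1$ and $|c_2|\le 1-|c_1|^2$. The crude bound $|b_n|\le2$ alone could not give the middle case.
\item The equality clause in Schwarz--Pick for $g=\omega/z$ should read ``$g$ is an automorphism or a unimodular constant'', the constant case being $|c_1|=1$. This is harmless, because you treat that case directly.
\item Your closing remark about $v=0,1$ is incomplete. At $v=0$, equality holds for the whole family $p=\tfrac{1+\lambda}{2}\cdot\tfrac{1+z}{1-z}+\tfrac{1-\lambda}{2}\cdot\tfrac{1-z}{1+z}$, $0\le\lambda\le1$, and its rotations; at $v=1$ it holds for their reciprocals. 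Your two families are only the endpoints $\lambda=1$ and $\lambda=0$. This continuum of extremals, not merely the coexistence of two families, is why the statement restricts its equality characterization to the open ranges.
\end{enumerate}
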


To evaluate the fourth-order expressions, we use a lemma by Prokhorov and Szynal \cite{Prokhorov} for a linear combination of the first three coefficients of a Schwarz function.

\begin{lem}\label{lemma2}\cite{Prokhorov}
Let $\omega \in \Omega$ be given by \eqref{zp1}. Then, for any real numbers $\mu$ and $\nu$,
\[
\left| c_3 + \mu c_1 c_2 + \nu c_1^3 \right| \le \Phi(\mu, \nu),
\]
where $\Phi(\mu, \nu)$ is defined on different regions of the $(\mu, \nu)$ plane by:
\[
\Phi(\mu, \nu) = 
\begin{cases}
1, & \text{if } (\mu,\nu) \in D_1 \cup D_2 \cup \{(2,1)\}, \\
|\nu|, & \text{if } (\mu,\nu) \in \bigcup_{k=3}^{7}D_k, \\
\frac{2}{3}(|\mu|+1)\left( \frac{|\mu|+1}{3(|\mu|+1+\nu)} \right)^{1/2}, & \text{if } (\mu,\nu) \in D_8 \cup D_9, \\
\frac{1}{3}\left( \frac{\mu^2-4}{\nu} \right) \left( \frac{4-\mu^2}{3(1-\nu)} \right)^{1/2}, & \text{if } (\mu,\nu) \in D_{10} \cup D_{11} \setminus \{(2,1)\}, \\
\frac{2}{3}(|\mu|-1)\left( \frac{|\mu|-1}{3(|\mu|-1-\nu)} \right)^{1/2}, & \text{if } (\mu,\nu) \in D_{12}.
\end{cases}
\]
The regions relevant to our proofs are:
\begin{align}
D_5 &= \{(\mu,\nu) : |\mu| < 2, \, \nu \ge 1 \}, \\
D_6 &= \left\{(\mu,\nu) : 2 \le |\mu| < 4, \, \nu \ge \frac{1}{12}(\mu^2+8) \right\}, \\
D_7 &= \left\{(\mu,\nu) : |\mu| \ge 4, \, \nu \ge \frac{2}{3}(|\mu|-1) \right\}.
\end{align}
\end{lem}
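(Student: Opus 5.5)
The natural route is the Schur-parametrization argument for Schwarz functions. I would reduce the functional to an explicit function of three parameters in the closed unit disk, then carry out an elementary but lengthy maximization whose case structure produces the regions $D_1,\dots,D_{12}$. Since only $D_5$, $D_6$, $D_7$ (where $\Phi=|\nu|$) are used later, I would carry out those cases in full and indicate how the other cases follow the same scheme.

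\emph{Step 1 (parametrization).} For $\omega\in\Omega$ given by \eqref{zp1}, the Schur algorithm (equivalently, the Carlson/Libera--Złotkiewicz formulas) gives
\[
c_1=x,\quad c_2=(1-|x|^2)y,\quad c_3=(1-|x|^2)\bigl((1-|y|^2)w-\bar{x}y^2\bigr),
\]
with $|x|,|y|,|w|\le 1$. Conversely, every such triple arises from some $\omega$. Since $\mu,\nu$ are real, the rotation $\omega(z)\mapsto e^{-i\theta}\omega(e^{i\theta}z)$ multiplies the functional by a unimodular constant. We may therefore assume $x\in[0,1]$. Then
\[
c_3+\mu c_1c_2+\nu c_1^3=(1-x^2)(1-|y|^2)w+x\bigl[\nu x^2+(1-x^2)(\mu y-y^2)\bigr].
\]

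\emph{Step 2 (eliminate $w$).} Choosing $w$ with $|w|=1$ and the optimal argument, the supremum equals
\[
F(x,y)=(1-x^2)(1-|y|^2)+x\,\bigl|\nu x^2+(1-x^2)(\mu y-y^2)\bigr|,\qquad x\in[0,1],\ |y|\le1.
\]
Now $\Phi(\mu,\nu)=\max F$. The endpoint values are $F(0,y)\le 1$ and $F(1,y)=|\nu|$; these account for the constants $1$ and $|\nu|$ in the lemma. The value $|\nu|$ is attained by $\omega(z)=z$, and the value $1$ by $\omega(z)=z^3$.

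\emph{Step 3 (the regions $D_5$--$D_7$).} Here $\nu\ge1$, so $\nu>0$. Write $y=re^{it}$ and use the triangle inequality, $|\nu x^2+(1-x^2)(\mu y-y^2)|\le \nu x^2+(1-x^2)(|\mu|r+r^2)$. It then suffices to show
\[
(1-x^2)(1-r^2)+\nu x^3+x(1-x^2)(|\mu|r+r^2)\le\nu .
\]
Dividing by $1-x^2$ (for $x<1$) and using $\nu(1-x^3)/(1-x^2)=\nu(1+x+x^2)/(1+x)$, this becomes
\[
g(x,r):=1-r^2+x(|\mu|r+r^2)\le \nu\,\frac{1+x+x^2}{1+x}.
\]
The left side is affine in $x$ and the right side is convex in $x$. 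I would compare them by maximizing $g$ in $r$ for fixed $x$:
\begin{itemize}
\item If $x\le 1$, then $g$ is concave in $r$ provided $1-x\ge 0$. Its maximizer is $r^*=\min\{1,\ x|\mu|/(2(1-x))\}$.
\item If $r^*<1$, the maximum is $1+x^2\mu^2/(4(1-x))$.
\item If $r^*=1$, the maximum is $x(|\mu|+1)$.
\end{itemize}
The resulting one-variable inequalities in $x\in[0,1)$ should hold exactly under the three stated thresholds. These are $\nu\ge1$ when $|\mu|<2$; $\nu\ge(\mu^2+8)/12$ when $2\le|\mu|<4$, which comes from tangency in the interior branch; and $\nu\ge\frac23(|\mu|-1)$ when $|\mu|\ge4$, which comes from the $r=1$ branch. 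Verifying this is a calculus exercise: clear denominators and check the sign of a polynomial in $x$ on $[0,1]$.

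\emph{Main obstacle.} The hard part is Step 3 together with its analogue for the remaining regions. There one must locate interior critical points of $F$ where the triangle-inequality step is not lossless, that is, where the argument of $y$ matters. These critical points produce the square-root expressions in $D_8$--$D_{12}$. For the regions we need, I would first try the crude triangle-inequality bound above. If it fails near the boundary curve $\nu=(\mu^2+8)/12$, I would instead keep $y$ real and negative, with its sign matched to $\mu$, which is the genuinely extremal configuration. I would then check the tangency condition of the quartic in $x$ directly. This equality case is exactly what defines the boundary of $D_6$.
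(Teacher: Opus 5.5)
The paper does not prove this lemma; it quotes it from Prokhorov--Szynal. So your argument has to stand on its own, and it breaks down in Step 3. Steps 1--2 are fine, apart from one slip: $e^{-i\theta}\omega(e^{i\theta}z)$ leaves $c_1$ unchanged. The normalization $c_1=x\ge 0$ comes from $\omega(e^{i\theta}z)$, which multiplies the functional by $e^{3i\theta}$.

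The real problem is the triangle inequality $|\nu x^2+(1-x^2)(\mu y-y^2)|\le \nu x^2+(1-x^2)(|\mu|r+r^2)$. It ignores that $\mu y$ and $-y^2$ can never both be positive, and this loss is fatal. In your reduced inequality take $r=1$, so $g(x,1)=x(|\mu|+1)$, and let $x\to1^-$. You would need $|\mu|+1\le\frac32\nu$, i.e. $\nu\ge\frac23(|\mu|+1)$. This exceeds the stated threshold in three places:
\begin{itemize}
\item on $D_5$ whenever $|\mu|>\frac12$;
\item on all of $D_6$, because $\mu^2+8<8|\mu|+8$ for $2\le|\mu|<8$;
\item on all of $D_7$.
\end{itemize}
It already fails for the pairs $(-\frac52,\frac54)$, $(-3,\frac{11}{6})$, $(-3,\frac{14}{9})$ used in Section~3. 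For instance, with $\mu=-\frac52$, $\nu=\frac54$, $x=0.9$, $r=1$ one gets $g=3.15$ against $\nu(1+x+x^2)/(1+x)\approx1.78$. So your branches do not produce $\frac1{12}(\mu^2+8)$ and $\frac23(|\mu|-1)$ as claimed; the $r=1$ branch produces $\frac23(|\mu|+1)$.

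The stated thresholds come from keeping the real part instead of the modulus. With $x=1-\varepsilon$ one has $F=\nu+\varepsilon\bigl[2(1-|y|^2)+2\Re(\mu y-y^2)-3\nu\bigr]+O(\varepsilon^2)$. Moreover $1-|y|^2+\Re(\mu y-y^2)=1+\mu s-2s^2$ with $s=\Re y\in[-1,1]$. Its maximum is $1+\mu^2/8$ for $|\mu|\le4$ and $|\mu|-1$ for $|\mu|\ge4$. This yields exactly $\nu\ge\frac1{12}(\mu^2+8)$ and $\nu\ge\frac23(|\mu|-1)$, while $x=0$ yields $\nu\ge1$. But this shows only that the thresholds are \emph{necessary}.

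Your fallback of taking $y$ real has the same defect. A particular choice of $y$ bounds $\max F$ from below, not from above. Also, for fixed $x$ and $|y|=r$, the square $|\nu x^2+(1-x^2)(\mu y-y^2)|^2$ is a concave quadratic in $\cos(\arg y)$ whose maximizer can be interior. So real $y$ need not be extremal in the inner maximization.

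What is missing is a global proof that $F(x,y)\le\nu$ for all $x\in[0,1)$ and all complex $|y|\le1$ under the thresholds. One route is to square $x|\nu x^2+(1-x^2)(\mu y-y^2)|\le\nu-(1-x^2)(1-|y|^2)$, which is legitimate since the right side is at least $\nu-1\ge0$. You would then verify the resulting polynomial inequality in $x$, $|y|$, $\cos(\arg y)$. This global verification is the real content of the lemma. The other regions, which the statement does not even define, are only asserted to follow ``the same scheme'' and remain unproved.
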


\begin{lem}\label{lemma3}
Let $p \in \mathcal{P}$ be of the form $p(z) = 1 + \sum_{n=1}^{\infty} b_n z^n$. If $\omega \in \Omega$ is defined by $\omega(z) = \frac{p(z)-1}{p(z)+1} = \sum_{n=1}^{\infty} c_n z^n$, then:
\begin{align}
c_1 &= \frac{1}{2}b_1, \label{zp2}\\
c_2 &= \frac{1}{2}b_2 - \frac{1}{4}b_1^2, \label{zp3}\\
c_3 &= \frac{1}{2}b_3 - \frac{1}{2}b_1 b_2 + \frac{1}{8}b_1^3, \label{zp4}\\
c_4 &= \frac{1}{2}b_4 - \frac{1}{2}b_1 b_3 - \frac{1}{4}b_2^2 + \frac{3}{8}b_1^2 b_2 - \frac{1}{16}b_1^4. \label{zp5}
\end{align}
\end{lem}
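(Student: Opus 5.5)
We will prove Lemma~\ref{lemma3} by direct power-series inversion. Since $p\in\mathcal{P}$, $\Re p>0$ gives $p(z)+1\neq 0$ on $\mathbb{D}$, and the Cayley map $w\mapsto (w-1)/(w+1)$ sends the right half-plane into $\mathbb{D}$. Hence $\omega=(p-1)/(p+1)$ is analytic in $\mathbb{D}$, satisfies $|\omega(z)|<1$, and has $\omega(0)=(1-1)/(1+1)=0$. So $\omega\in\Omega$, and only the coefficient formulas remain to be checked.

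To avoid dividing series, we clear denominators. The definition is equivalent to $\omega(p+1)=p-1$, that is,
\[
\omega(z)\Bigl(2+\sum_{n\ge1}b_nz^n\Bigr)=\sum_{n\ge1}b_nz^n .
\]
Comparing the coefficients of $z^n$ gives the recursion
\[
2c_n+\sum_{k=1}^{n-1}c_k\,b_{n-k}=b_n,\qquad\text{so}\qquad c_n=\frac12\Bigl(b_n-\sum_{k=1}^{n-1}c_kb_{n-k}\Bigr),\qquad n\ge1 .
\]

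We apply this recursion for $n=1,2,3,4$ in turn, substituting the earlier $c_k$ each time.
\begin{itemize}
\item For $n=1$ the sum is empty, so $c_1=b_1/2$, which is \eqref{zp2}.
\item For $n=2$, $c_2=\tfrac12(b_2-c_1b_1)=\tfrac12b_2-\tfrac14b_1^2$, which is \eqref{zp3}.
\item For $n=3$, $c_3=\tfrac12(b_3-c_1b_2-c_2b_1)$. Inserting $c_1$ and $c_2$ and collecting the two $b_1b_2$ terms gives \eqref{zp4}.
\item For $n=4$, $c_4=\tfrac12(b_4-c_1b_3-c_2b_2-c_3b_1)$, which expands to \eqref{zp5}.
\end{itemize}

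The only step that needs care is the bookkeeping for $c_4$. There are two contributions to $b_1b_3$: $-\tfrac14$ from $c_1b_3$ and $-\tfrac14$ from $c_3b_1$, totalling $-\tfrac12$. The coefficient of $b_2^2$ is $-\tfrac14$. For $b_1^2b_2$, we get $+\tfrac18$ from $c_2b_2$ and $+\tfrac14$ from $c_3b_1$, totalling $+\tfrac38$. Finally, $b_1^4$ receives $-\tfrac1{16}$ from $c_3b_1$.

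As an independent cross-check, we can take $p(z)=(1+z)/(1-z)$, so that $b_n=2$ for all $n$ and $\omega(z)=z$. The formulas must then return $c_1=1$ and $c_2=c_3=c_4=0$, and indeed, for example, $c_4=1-2-1+3-1=0$.
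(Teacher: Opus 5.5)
Your proof is correct: clearing denominators in $\omega(p+1)=p-1$ gives the recursion $c_n=\tfrac12\bigl(b_n-\sum_{k=1}^{n-1}c_kb_{n-k}\bigr)$. The substitutions for $n=1,\dots,4$, including your term-by-term bookkeeping for $c_4$, reproduce \eqref{zp2}--\eqref{zp5} exactly. The paper states this lemma without proof, treating it as a routine coefficient comparison, so your argument is exactly the verification it leaves to the reader.
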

\section{Main Results and Proofs}

\subsection{Higher-Order Schippers-Schwarzian Derivatives for the Class $\mathcal{P}^*$}
In this section, we determine the sharp upper bounds for the initial functionals $|\sigma_3(f)(0)|$ and $|\sigma_4(f)(0)|$ for functions belonging to the class $\mathcal{P}^*$.

\begin{theo}\label{T1}
Let $f \in \mathcal{P}^*$ be given by $f(z) = z + \sum_{n=2}^{\infty} a_n z^n$. Then
\[
|\sigma_3(f)(0)| \le 4 \quad \text{and} \quad |\sigma_4(f)(0)| \le 15.
\]
These bounds are sharp.
\end{theo}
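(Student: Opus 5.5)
The plan is to reduce both functionals to coefficient functionals of a Schwarz function and then apply the Schwarz-function lemmas. Since $f\in\mathcal{P}^*$, there is $\omega\in\Omega$ of the form \eqref{zp1} with $f'(z)=\varphi(\omega(z))$, where $\varphi(z)=e^z(1+\sin z)$.

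\textbf{Step 1: expand $\varphi$.} Multiplying $1+z+\tfrac12 z^2+\tfrac16 z^3+\cdots$ by $1+z-\tfrac16 z^3+\cdots$ gives
\[
\varphi(z)=1+2z+\tfrac32 z^2+\tfrac12 z^3+\cdots .
\]

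\textbf{Step 2: express $a_2,a_3,a_4$ through $c_1,c_2,c_3$.} Substituting $\omega$ and comparing with $f'(z)=1+2a_2z+3a_3z^2+4a_4z^3+\cdots$ yields
\[
a_2=c_1,\qquad a_3=\tfrac13\bigl(2c_2+\tfrac32 c_1^2\bigr),\qquad a_4=\tfrac14\bigl(2c_3+3c_1c_2+\tfrac12 c_1^3\bigr).
\]
Inserting these into \eqref{sg3} and \eqref{sg4} should give
\[
\sigma_3(f)(0)=4\bigl(c_2-\tfrac34 c_1^2\bigr),\qquad \sigma_4(f)(0)=12\bigl(c_3-\tfrac52 c_1c_2+\tfrac54 c_1^3\bigr).
\]
The only real work in the proof is checking these two reductions carefully. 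In $\sigma_4$ the $c_1^3$ terms $3-36+48$ combine to $15$, and the $c_1c_2$ terms $18-48$ combine to $-30$.

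\textbf{Step 3: bound $\sigma_3$.} I will use the classical estimate $|c_2-tc_1^2|\le\max\{1,|t|\}$ for $\omega\in\Omega$. This follows from Lemma~\ref{lemma1} via Lemma~\ref{lemma3}, or directly from $|c_2|\le 1-|c_1|^2$. With $t=3/4$ it gives $|\sigma_3(f)(0)|\le 4$. Equality holds for $\omega(z)=z^2$, that is, $f'(z)=\varphi(z^2)$, for which $c_2=1$ and $c_1=0$.

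\textbf{Step 4: bound $\sigma_4$.} Here I apply Lemma~\ref{lemma2} with $\mu=-5/2$ and $\nu=5/4$. Since $2\le|\mu|<4$ and
\[
\tfrac1{12}(\mu^2+8)=\tfrac{14.25}{12}=1.1875\le 1.25=\nu,
\]
the point $(\mu,\nu)$ lies in $D_6$, so $\Phi(\mu,\nu)=|\nu|=5/4$. Hence $|\sigma_4(f)(0)|\le 12\cdot\tfrac54=15$. Equality holds for $\omega(z)=z$, that is, $f'(z)=e^z(1+\sin z)$: then $c_2=c_3=0$ and $c_1=1$, so the functional equals $12\cdot\tfrac54=15$. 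Both extremal functions are obtained by integrating $f'$ with $f(0)=0$.
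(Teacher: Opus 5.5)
Your proof is correct and follows the paper's route. You use the same reductions $\sigma_3(f)(0)=4c_2-3c_1^2$ and $\sigma_4(f)(0)=12\bigl(c_3-\tfrac52c_1c_2+\tfrac54c_1^3\bigr)$, the estimate $|c_2-\tfrac34c_1^2|\le 1$ (equivalently Lemma~\ref{lemma1} with $v=\tfrac78$), and Lemma~\ref{lemma2} with $(\mu,\nu)=(-\tfrac52,\tfrac54)\in D_6$. Your extremal function for $\sigma_4$ is the correct one and actually corrects the paper. The paper's choice $\omega(z)=z^3$ gives only $\sigma_4(f)(0)=12$. Your choice $\omega(z)=z$, i.e.\ $f'(z)=e^z(1+\sin z)$, gives $a_2=1$, $a_3=\tfrac12$, $a_4=\tfrac18$, hence $\sigma_4(f)(0)=24\bigl(\tfrac18-\tfrac32+2\bigr)=15$. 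This is as expected, since the value $\Phi=|\nu|$ on $D_6$ is attained at $c_1=1$.
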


\begin{proof}
Let $f \in \mathcal{P}^*$. By definition, there exists a Schwarz function $\omega \in \Omega$ with $\omega(z) = c_1 z + c_2 z^2 + c_3 z^3 + \dots$ such that
\begin{equation}\label{pd1}
f'(z) = e^{\omega(z)}\big(1 + \sin \omega(z)\big).
\end{equation}
Expanding the series on the right-hand side of \eqref{pd1} and comparing coefficients with the standard formula $f'(z) = 1 + 2a_2 z + 3a_3 z^2 + 4a_4 z^3 + \dots$, we obtain the following relations:
\begin{align}
a_2 &= c_1, \label{pd2}\\
a_3 &= \frac{2}{3}c_2 + \frac{1}{2}c_1^2, \label{pd12}\\
a_4 &= \frac{1}{2}c_3 + \frac{3}{4}c_1 c_2 + \frac{1}{8}c_1^3. \label{pd13}
\end{align}

\subsection*{ (i) Evaluation of $|\sigma_3(f)(0)|$}
Using the definition of the third-order Schippers-Schwarzian derivative at the origin along with equations \eqref{pd2} and \eqref{pd12}, we have:
\[
\sigma_3(f)(0) = 6(a_3 - a_2^2) = 6\left( \frac{2}{3}c_2 + \frac{1}{2}c_1^2 - c_1^2 \right) = 4c_2 - 3c_1^2.
\]
By expressing the Schwarz coefficients $c_1$ and $c_2$ in terms of the Carath\'{e}odory coefficients $b_1$ and $b_2$ via Lemma~\ref{lemma3}, we get:
\[
\sigma_3(f)(0) = 4\left(\frac{1}{2}b_2 - \frac{1}{4}b_1^2\right) - 3\left(\frac{1}{2}b_1\right)^2 = 2b_2 - \frac{7}{4}b_1^2 = 2\left(b_2 - \frac{7}{8}b_1^2\right).
\]
Applying Lemma~\ref{lemma1} with $v = \frac{7}{8}$, we observe that $0 \le v \le 1$. Hence, the inequality $|b_2 - v b_1^2| \le 2$ holds, which directly implies:
\[
|\sigma_3(f)(0)| \le 2 \times 2 = 4.
\]
Equality is achieved when $\omega(z) = z^2$, which yields $c_1 = 0$ and $c_2 = 1$.

\subsection*{ (ii) Evaluation of $|\sigma_4(f)(0)|$}
Using the representation formula for the fourth-order Schippers-Schwarzian derivative at the origin along with the structural relations \eqref{pd2}--\eqref{pd13}, we obtain:
\begin{align}
\sigma_4(f)(0) &= 24(a_4 - 3a_2 a_3 + 2a_2^3) \\
&= 24\left[ \left(\frac{1}{2}c_3 + \frac{3}{4}c_1 c_2 + \frac{1}{8}c_1^3\right) - 3c_1\left(\frac{2}{3}c_2 + \frac{1}{2}c_1^2\right) + 2c_1^3 \right] \\
&= 24\left( \frac{1}{2}c_3 - \frac{5}{4}c_1 c_2 + \frac{5}{8}c_1^3 \right) \\
&= 12\left( c_3 - \frac{5}{2}c_1 c_2 + \frac{5}{4}c_1^3 \right).
\end{align}
In order to apply the maximization framework established in Lemma~\ref{lemma2}, we consider the functional $|c_3 + \mu c_1 c_2 + \nu c_1^3|$ with real parameters:
\[
\mu = - \frac{5}{2} \quad \text{and} \quad \nu = \frac{5}{4}.
\]
Since $|\mu| = \frac{5}{2}$, it falls within the interval $2 \le |\mu| < 4$. To check if the parameter pair belongs to the region $D_6$, we evaluate the boundary constraint curve:
\[
\frac{1}{12}(\mu^2 + 8) = \frac{1}{12}\left(\frac{25}{4} + 8\right) = \frac{57}{48} = \frac{19}{16} = 1.1875.
\]
Comparing the value of $\nu = \frac{5}{4} = \frac{20}{16} = 1.25$ with this threshold, it is clear that:
\[
\nu \ge \frac{1}{12}(\mu^2 + 8).
\]
Thus, $(\mu, \nu) \in D_6$. According to Lemma~\ref{lemma2}, the maximum value of the functional in this region is given by $\Phi(\mu, \nu) = |\nu| = \frac{5}{4}$. Therefore, we have:
\[
|\sigma_4(f)(0)| \le 12 \Phi(\mu,\nu) = 12 \times \frac{5}{4} = 15.
\]
This bound is sharp for the function $f \in \mathcal{P}^*$ generated by the choice of the Schwarz function $\omega(z) = z^3$, which maps $c_3 = 1$ and sets all other initial parameters to zero.
\end{proof}

\subsection{Sharp Bounds for Higher-Order Schwarzian Derivatives for $f \in \mathcal{S}_{\rho}^{*}$}

\begin{theo}\label{T2}
Let $f \in \mathcal{S}_{\rho}^{*}$. Then the following sharp estimates hold:
\[
|\sigma_3(f)(0)| \le 3 \quad \text{and} \quad |\sigma_4(f)(0)| \le \frac{44}{3}.
\]
These bounds are sharp.
\end{theo}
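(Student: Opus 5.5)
The plan is to follow the scheme of Theorem~\ref{T1}: write the subordination through a Schwarz function, express $a_2,a_3,a_4$ in terms of $c_1,c_2,c_3$, and then apply Lemma~\ref{lemma1} (via Lemma~\ref{lemma3}) to $\sigma_3(f)(0)$ and Lemma~\ref{lemma2} to $\sigma_4(f)(0)$.

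\textbf{Coefficient relations.} For $f\in\mathcal{S}^*_\rho$ there is $\omega\in\Omega$ with $zf'(z)/f(z)=1+\sinh^{-1}(\omega(z))$. Since $\sinh^{-1}w=w-\tfrac16 w^3+\cdots$, the right-hand side equals
\[
1+c_1z+c_2z^2+\left(c_3-\tfrac16c_1^3\right)z^3+\cdots .
\]
The left-hand side equals
\[
1+a_2z+(2a_3-a_2^2)z^2+(3a_4-3a_2a_3+a_2^3)z^3+\cdots .
\]
Comparing coefficients gives
\[
a_2=c_1,\qquad a_3=\tfrac12(c_2+c_1^2),\qquad a_4=\tfrac13c_3+\tfrac12c_1c_2+\tfrac19c_1^3 .
\]
I would double-check this algebra, since both bounds depend on the exact constants.

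\textbf{Bound for $\sigma_3$.} From \eqref{sg3}, $\sigma_3(f)(0)=6(a_3-a_2^2)=3(c_2-c_1^2)$. By Lemma~\ref{lemma3}, $c_2-c_1^2=\tfrac12(b_2-b_1^2)$. Lemma~\ref{lemma1} with $v=1$ gives $|b_2-b_1^2|\le2$, so $|\sigma_3(f)(0)|\le3$. For sharpness, $\omega(z)=z$ gives $c_1=1$, $c_2=0$, hence $|\sigma_3(f)(0)|=3$; $\omega(z)=z^2$ also works. The extremal function is the one with $zf'/f=1+\sinh^{-1}z$ (or $1+\sinh^{-1}z^2$).

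\textbf{Bound for $\sigma_4$.} From \eqref{sg4},
\[
\sigma_4(f)(0)=24\left(\tfrac13c_3-c_1c_2+\tfrac{11}{18}c_1^3\right)=8\left(c_3-3c_1c_2+\tfrac{11}{6}c_1^3\right).
\]
This is Lemma~\ref{lemma2} with $\mu=-3$ and $\nu=\tfrac{11}{6}$. Since $2\le|\mu|<4$ and $\tfrac1{12}(\mu^2+8)=\tfrac{17}{12}<\tfrac{22}{12}=\nu$, the pair lies in $D_6$. Hence $\Phi=|\nu|=\tfrac{11}{6}$ and $|\sigma_4(f)(0)|\le\tfrac{44}{3}$.

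For sharpness, the value $|\nu|$ is attained with $c_1=1$, $c_2=c_3=0$, that is $\omega(z)=z$. So the same function $f$ with $zf'/f=1+\sinh^{-1}z$ is extremal, and $\sigma_4(f)(0)=44/3$.

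The main risk is arithmetic: an error in the $c_1^3$ coefficient of $a_4$ would move $\nu$ and possibly take $(\mu,\nu)$ out of $D_6$. I would therefore verify the result against the explicit expansion of $f(z)=z\exp\!\left(\int_0^z \sinh^{-1}(t)\,dt/t\right)$.
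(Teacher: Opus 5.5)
Your proposal is correct and follows essentially the same route as the paper: the same coefficient relations, Lemma~\ref{lemma1} with $v=1$ for $\sigma_3(f)(0)$, and Lemma~\ref{lemma2} with $(\mu,\nu)=(-3,\tfrac{11}{6})\in D_6$ for $\sigma_4(f)(0)$. Your extremal choice $\omega(z)=z$ for $\sigma_4$ is the right one, since it gives $24\cdot\tfrac{11}{18}=\tfrac{44}{3}$, whereas the paper's stated $\omega(z)=z^3$ yields only $|\sigma_4(f)(0)|=8$.
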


\begin{proof}
Let $f \in \mathcal{S}_{\rho}^{*}$. By definition, there exists a Schwarz function $\omega \in \Omega$ with $\omega(z) = c_1 z + c_2 z^2 + c_3 z^3 + \dots$ such that
\begin{equation}\label{zp30}
\frac{z f'(z)}{f(z)} = 1 + \sinh^{-1}(\omega(z)).
\end{equation}
Expanding $\sinh^{-1}(x) = x - \frac{1}{6}x^3 + \dots$, expanding both sides of \eqref{zp30}, and equating coefficients of identical powers of $z$, the initial Taylor coefficients $a_n$ are expressed in terms of the Schwarz coefficients $c_n$ as follows:
\begin{align}
a_2 &= c_1, \label{zp31}\\
a_3 &= \frac{1}{2}c_2 + \frac{1}{2}c_1^2, \label{zp32}\\
a_4 &= \frac{1}{3}c_3 + \frac{1}{2}c_1 c_2 + \frac{1}{9}c_1^3. \label{zp33}
\end{align}

\subsection*{ (i) Evaluation of $|\sigma_3(f)(0)|$}
Substituting the expressions for $a_2$ and $a_3$ from \eqref{zp31} and \eqref{zp32} into the third-order Schippers-Schwarzian derivative formula \eqref{sg3}, we obtain:
\[
\sigma_3(f)(0) = 6(a_3 - a_2^2) = 6\left( \frac{1}{2}c_2 + \frac{1}{2}c_1^2 - c_1^2 \right) = 3c_2 - 3c_1^2.
\]
By employing Lemma~\ref{lemma3} to express the Schwarz coefficients $c_1$ and $c_2$ in terms of the Carath\'{e}odory coefficients $b_1$ and $b_2$, we have:
\[
\sigma_3(f)(0) = 3\left(\frac{1}{2}b_2 - \frac{1}{4}b_1^2\right) - 3\left(\frac{1}{2}b_1\right)^2 = \frac{3}{2}b_2 - \frac{3}{2}b_1^2 = \frac{3}{2}(b_2 - b_1^2).
\]
Applying Lemma~\ref{lemma1} with $v = 1$, we observe that $v \in [0,1]$, which yields the inequality $|b_2 - b_1^2| \le 2$. Thus, we establish:
\[
|\sigma_3(f)(0)| \le \frac{3}{2} \times 2 = 3.
\]
The result is sharp for the function corresponding to the choice $\omega(z) = z^2$.

\subsection*{ (ii) Evaluation of $|\sigma_4(f)(0)|$}
Using the representation formula for the fourth-order derivative \eqref{sg4} along with the structural relations \eqref{zp31}--\eqref{zp33}, we have:
\begin{align}
\sigma_4(f)(0) &= 24(a_4 - 3a_2 a_3 + 2a_2^3) \\
&= 24\left[ \left(\frac{1}{3}c_3 + \frac{1}{2}c_1 c_2 + \frac{1}{9}c_1^3\right) - 3c_1\left(\frac{1}{2}c_2 + \frac{1}{2}c_1^2\right) + 2c_1^3 \right] \\
&= 24\left( \frac{1}{3}c_3 - c_1 c_2 + \frac{11}{18}c_1^3 \right) \\
&= 8\left( c_3 - 3c_1 c_2 + \frac{11}{6}c_1^3 \right).
\end{align}
To find the sharp upper bound of this expression, we apply the optimization framework from Lemma~\ref{lemma2} to the functional $|c_3 + \mu c_1 c_2 + \nu c_1^3|$ with real parameters:
\[
\mu = -3 \quad \text{and} \quad \nu = \frac{11}{6}.
\]
Since $|\mu| = 3$, it satisfies the condition $2 \le |\mu| < 4$. We compute the regional boundary path threshold for $D_6$:
\[
\frac{1}{12}(\mu^2 + 8) = \frac{1}{12}(3^2 + 8) = \frac{17}{12} \approx 1.417.
\]
Comparing our parameter value $\nu = \frac{11}{6} = \frac{22}{12} \approx 1.833$ with this threshold, it is clear that:
\[
\nu \ge \frac{1}{12}(\mu^2 + 8).
\]
Consequently, the parameter pair $(\mu, \nu)$ resides in region $D_6$. By Lemma~\ref{lemma2}, the maximum value of the functional in this region is given by $\Phi(\mu, \nu) = |\nu| = \frac{11}{6}$. Therefore, we obtain:
\[
|\sigma_4(f)(0)| \le 8 \Phi(\mu,\nu) = 8 \times \frac{11}{6} = \frac{44}{3}.
\]
The bound is sharp for the function $f \in \mathcal{S}_{\rho}^{*}$ generated by the choice of the Schwarz function $\omega(z) = z^3$.
\end{proof}
\subsection{Higher-Order Schippers-Schwarzian Derivatives for the Class $\mathcal{S}^{\ast}_{\text{car}}$}
In this section, we determine the sharp upper bounds for the initial functionals $|\sigma_3(f)(0)|$ and $|\sigma_4(f)(0)|$ for functions belonging to the class $\mathcal{S}^{\ast}_{\text{car}}$.

\begin{theo}\label{T3}
Let $f \in \mathcal{S}^{\ast}_{\text{car}}$. Then the following sharp estimates hold:
\[
|\sigma_3(f)(0)| \le 4 \quad \text{and} \quad |\sigma_4(f)(0)| \le \frac{448}{27}.
\]
These bounds are sharp.
\end{theo}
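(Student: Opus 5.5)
The plan is to follow the scheme of Theorems~\ref{T1} and~\ref{T2}. Let $f \in \mathcal{S}^{\ast}_{\mathrm{car}}$. Then there is $\omega\in\Omega$, $\omega(z)=c_1z+c_2z^2+c_3z^3+\cdots$, with
\[
\frac{zf'(z)}{f(z)} = 1+\frac43\omega(z)+\frac23\omega(z)^2 .
\]
\textbf{Coefficient relations.} Write $zf'/f = 1+p_1z+p_2z^2+p_3z^3+\cdots$. Expanding the right-hand side gives
\[
p_1=\tfrac43 c_1,\qquad p_2=\tfrac43 c_2+\tfrac23 c_1^2,\qquad p_3=\tfrac43 c_3+\tfrac43 c_1c_2 .
\]
The identity $zf'=f\cdot(zf'/f)$ yields $a_2=p_1$, $2a_3=p_2+p_1a_2$ and $3a_4=p_3+p_2a_2+p_1a_3$. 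I expect these to give
\[
a_2=\tfrac43 c_1,\qquad a_3=\tfrac23 c_2+\tfrac{11}{9}c_1^2,\qquad a_4=\tfrac49 c_3+\tfrac43 c_1c_2+\tfrac{68}{81}c_1^3 .
\]

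\textbf{The bound for $\sigma_3$.} By \eqref{sg3},
\[
\sigma_3(f)(0)=6(a_3-a_2^2)=4c_2-\tfrac{10}{3}c_1^2 .
\]
Passing to Carath\'eodory coefficients via Lemma~\ref{lemma3} gives
\[
\sigma_3(f)(0)=2\bigl(b_2-\tfrac{11}{12}b_1^2\bigr).
\]
Since $v=\tfrac{11}{12}\in[0,1]$, Lemma~\ref{lemma1} gives $|\sigma_3(f)(0)|\le 4$. Equality occurs for $\omega(z)=z^2$, since then $c_1=0$ and $c_2=1$.

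\textbf{The bound for $\sigma_4$.} By \eqref{sg4}, the $c_1c_2$ terms combine as $\tfrac43-\tfrac83=-\tfrac43$. The $c_1^3$ terms combine as $\tfrac{68-396+384}{81}=\tfrac{56}{81}$. Hence
\[
\sigma_4(f)(0)=24\Bigl(\tfrac49 c_3-\tfrac43 c_1c_2+\tfrac{56}{81}c_1^3\Bigr)=\tfrac{32}{3}\Bigl(c_3-3c_1c_2+\tfrac{14}{9}c_1^3\Bigr).
\]
Now apply Lemma~\ref{lemma2} with $\mu=-3$ and $\nu=\tfrac{14}{9}$. We have $2\le|\mu|<4$, and $\tfrac1{12}(\mu^2+8)=\tfrac{17}{12}<\tfrac{14}{9}$, so $(\mu,\nu)\in D_6$. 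Therefore $\Phi(\mu,\nu)=|\nu|=\tfrac{14}{9}$, and
\[
|\sigma_4(f)(0)|\le \tfrac{32}{3}\cdot\tfrac{14}{9}=\tfrac{448}{27}.
\]

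\textbf{Sharpness.} In the region where $\Phi=|\nu|$, the maximum comes from the $c_1^3$ term, not from $c_3$. So the extremal Schwarz function should be $\omega(z)=z$, not $z^3$; the latter only gives $\tfrac{32}{3}$. With $\omega(z)=z$ we have $c_1=1$ and $c_2=c_3=0$, so the functional equals $\tfrac{14}{9}$. The extremal function is then
\[
f(z)=z\exp\Bigl(\tfrac43 z+\tfrac13 z^2\Bigr),
\]
which satisfies $zf'/f=1+\tfrac43z+\tfrac23z^2$ and gives $\sigma_4(f)(0)=\tfrac{448}{27}$.

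\textbf{Main obstacle.} The delicate step is the bookkeeping of $a_4$ and the $c_1^3$ coefficient. The final constant depends on the exact cancellation $68-396+384=56$ and on checking that $\nu$ lies above the $D_6$ threshold. I will double-check this by evaluating $\sigma_4(f)(0)$ directly on the extremal $f$ with $\omega(z)=z$.
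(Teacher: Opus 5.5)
Your proposal is correct and follows the paper's proof step for step: the same coefficient relations, Lemma~\ref{lemma1} with $v=\tfrac{11}{12}$ for $\sigma_3$, and Lemma~\ref{lemma2} with $(\mu,\nu)=(-3,\tfrac{14}{9})\in D_6$ for $\sigma_4$. Your sharpness remark is also right and corrects the paper, which cites $\omega(z)=z^3$ (giving only $|\sigma_4(f)(0)|=\tfrac{32}{3}$), whereas $\omega(z)=z$, i.e.\ $f(z)=z\exp\bigl(\tfrac43 z+\tfrac13 z^2\bigr)$, attains $\tfrac{448}{27}$.
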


\begin{proof}
Let $f \in \mathcal{S}^*_{\text{car}}$. By definition, there exists a Schwarz function $\omega \in \Omega$ with $\omega(z) = c_1 z + c_2 z^2 + c_3 z^3 + \dots$ such that
\begin{equation}\label{zp19}
\frac{z f'(z)}{f(z)} = 1 + \frac{4}{3}\omega(z) + \frac{2}{3}\omega^2(z).
\end{equation}
Expanding the series on both sides of \eqref{zp19} and equating coefficients of identical powers of $z$, the initial Taylor coefficients $a_n$ are expressed in terms of the Schwarz parameters $c_n$ as follows:
\begin{align}
a_2 &= \frac{4}{3}c_1, \label{zp20}\\
a_3 &= \frac{2}{3}c_2 + \frac{11}{9}c_1^2, \label{zp21}\\
a_4 &= \frac{4}{9}c_3 + \frac{4}{3}c_1 c_2 + \frac{68}{81}c_1^3. \label{zp22}
\end{align}

\subsection*{ (i) Evaluation of $|\sigma_3(f)(0)|$}
Substituting the coefficient relations \eqref{zp20} and \eqref{zp21} into the third-order Schippers-Schwarzian derivative formula \eqref{sg3}, we obtain:
\[
\sigma_3(f)(0) = 6(a_3 - a_2^2) = 6\left[ \left(\frac{2}{3}c_2 + \frac{11}{9}c_1^2\right) - \left(\frac{4}{3}c_1\right)^2 \right] = 4c_2 - \frac{10}{3}c_1^2.
\]
By employing Lemma~\ref{lemma3} to express the Schwarz parameters $c_1$ and $c_2$ in terms of the Carath\'{e}odory coefficients $b_1$ and $b_2$, we have:
\[
\sigma_3(f)(0) = 4\left(\frac{1}{2}b_2 - \frac{1}{4}b_1^2\right) - \frac{10}{3}\left(\frac{1}{2}b_1\right)^2 = 2b_2 - \frac{11}{6}b_1^2 = 2\left(b_2 - \frac{11}{12}b_1^2\right).
\]
Applying Lemma~\ref{lemma1} with $v = \frac{11}{12}$, we observe that $v \in [0,1]$, which yields the inequality $|b_2 - v b_1^2| \le 2$. Thus, we obtain:
\[
|\sigma_3(f)(0)| \le 2 \times 2 = 4.
\]
The bound is sharp for the function corresponding to the choice $\omega(z) = z^2$.

\subsection*{ (ii) Evaluation of $|\sigma_4(f)(0)|$}
Using the definition of the fourth-order Schippers-Schwarzian derivative at the origin \eqref{sg4} along with the structural relations \eqref{zp20}--\eqref{zp22}, we can write the invariant in terms of the Schwarz coefficients:
\begin{align}
\sigma_4(f)(0) &= 24(a_4 - 3a_2 a_3 + 2a_2^3) \\
&= \frac{32}{3}c_3 - 32c_1 c_2 + \frac{448}{27}c_1^3 \\
&= \frac{32}{3}\left( c_3 - 3c_1 c_2 + \frac{14}{9}c_1^3 \right).
\end{align}
To calculate the sharp upper bound of this expression, we apply the optimization framework from Lemma~\ref{lemma2} to the functional $|c_3 + \mu c_1 c_2 + \nu c_1^3|$ with parameters:
\[
\mu = -3 \quad \text{and} \quad \nu = \frac{14}{9}.
\]
Since $|\mu| = 3$, it satisfies the condition $2 \le |\mu| < 4$. We evaluate the regional boundary path threshold for $D_6$:
\[
\frac{1}{12}(\mu^2 + 8) = \frac{1}{12}(3^2 + 8) = \frac{17}{12} \approx 1.4167.
\]
Comparing our parameter value $\nu = \frac{14}{9} \approx 1.5556$ with this threshold, it is clear that:
\[
\nu \ge \frac{1}{12}(\mu^2 + 8).
\]
Consequently, the pair $(\mu, \nu)$ sits in region $D_6$. By Lemma~\ref{lemma2}, the maximum value of the functional in this region is given by $\Phi(\mu, \nu) = |\nu| = \frac{14}{9}$. Therefore, we obtain:
\[
|\sigma_4(f)(0)| \le \frac{32}{3} \Phi(\mu,\nu) = \frac{32}{3} \times \frac{14}{9} = \frac{448}{27}.
\]
The bound is sharp for the function $f \in \mathcal{S}^*_{\text{car}}$ generated by the choice of the Schwarz function $\omega(z) = z^3$.
\end{proof}

\section{Applications to Grunsky Coefficients and Teichm\"uller Theory}

The Grunsky coefficients constitute an essential analytic tool in geometric function theory, playing a central role in the study of univalent functions, quasiconformal mappings, and the universal Teichm\"uller space (see \cite{Duren}). For a normalized univalent function $f \in \mathcal{S}$ of the form
\[
f(z) = z + \sum_{n=2}^{\infty} a_n z^n,
\]
the initial Grunsky coefficients $\omega_{m,n}$ derived from the structural companion mapping expansion within the class $\Sigma$ can be explicitly stated in terms of the Taylor coefficients as follows (cf. \cite{Duren, Schippers}):
\begin{align}
\omega_{1,1} &= a_3 - a_2^2, \label{w11_new} \\
\omega_{1,2} &= a_4 - 3a_2 a_3 + 2a_2^3, \label{w12_new} \\
\omega_{2,2} &= a_5 - 4a_2 a_4 - 2a_3^2 + 10a_2^2 a_3 - 5a_2^4. \label{w22_new}
\end{align}

A key feature of the Schippers--Schwarzian derivatives is their direct algebraic relationship with these initial Grunsky coefficients. As established by Schippers \cite{Schippers}, the following identities hold exactly at the origin:
\begin{equation}\label{pw2}
\sigma_3(f)(0) = 6\omega_{1,1}, \qquad \sigma_4(f)(0) = 24\omega_{1,2}.
\end{equation}

Consequently, sharp estimates for the higher-order Schwarzian invariants yield immediate sharp bounds for these initial Grunsky coefficients. From a geometric perspective, the subclasses $\mathcal{P}^{*}$, $\mathcal{S}_{\mathrm{car}}^*$, and $\mathcal{S}_\rho^*$ represent families of mappings constrained by oscillatory, cardioid-type, and petal-shaped subordinant target domains, respectively (see \cite{Arora-Kumar-2022, SS, TAH}). The extremal behavior of the associated Grunsky coefficients reflects how these non-circular starlike configurations influence deformations within the framework of the universal Teichm\"uller space.

Combining identity \eqref{pw2} with the sharp estimates established in Section~3 yields the following theorem.

\begin{theo}\label{T4}
Let $f \in \mathcal{S}$. Then the following sharp estimates hold:
\begin{enumerate}[{\rm(i)}]
\item If $f \in \mathcal{P}^{*}$, then
\[
|\omega_{1,1}| \le \frac{2}{3}, \qquad |\omega_{1,2}| \le \frac{2}{3}.
\]
\item If $f \in \mathcal{S}_{\mathrm{car}}^*$, then
\[
|\omega_{1,1}| \le \frac{2}{3}, \qquad |\omega_{1,2}| \le \frac{56}{81}.
\]
\item If $f \in \mathcal{S}_\rho^*$, then
\[
|\omega_{1,1}| \le \frac{1}{2}, \qquad |\omega_{1,2}| \le \frac{11}{18}.
\]
\end{enumerate}
All bounds are sharp.
\end{theo}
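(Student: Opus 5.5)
The plan is to derive everything from identity \eqref{pw2} and the three preceding theorems. Since $\sigma_3(f)(0)=6\omega_{1,1}$ and $\sigma_4(f)(0)=24\omega_{1,2}$, every bound on $|\sigma_3(f)(0)|$ or $|\sigma_4(f)(0)|$ becomes a bound on $|\omega_{1,1}|$ or $|\omega_{1,2}|$ after dividing by $6$ or $24$. As a first check, I would compare \eqref{w11_new} and \eqref{w12_new} with \eqref{sg3} and \eqref{sg4}. They agree term by term: $6(a_3-a_2^2)$ and $24(a_4-3a_2a_3+2a_2^3)$. So \eqref{pw2} holds directly at the level of Taylor coefficients for every $f\in\mathcal{A}$, without any further theory.

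The three cases then go as follows.
\begin{enumerate}[{\rm(i)}]
\item For $\mathcal{P}^*$, Theorem~\ref{T1} gives $|\omega_{1,1}|\le 4/6=2/3$ and $|\omega_{1,2}|\le 15/24=5/8$. Since $5/8<2/3$, the stated inequality $|\omega_{1,2}|\le 2/3$ follows at once.
\item For $\mathcal{S}^*_{\mathrm{car}}$, Theorem~\ref{T3} gives $|\omega_{1,1}|\le 4/6=2/3$ and $|\omega_{1,2}|\le \tfrac{448}{27}\cdot\tfrac1{24}=\tfrac{56}{81}$.
\item For $\mathcal{S}^*_\rho$, Theorem~\ref{T2} gives $|\omega_{1,1}|\le 3/6=1/2$ and $|\omega_{1,2}|\le \tfrac{44}{3}\cdot\tfrac1{24}=\tfrac{11}{18}$.
\end{enumerate}
Each class is defined by subordination of $zf'/f$ or $f'$ to a function with positive real part. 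Hence each class lies in $\mathcal{S}$, and the hypothesis $f\in\mathcal{S}$ is consistent with these three cases.

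For sharpness, I would reuse the extremal functions already identified in Theorems~\ref{T1}--\ref{T3}. These are the functions generated by $\omega(z)=z^2$ for $\omega_{1,1}$ and by $\omega(z)=z^3$ for $\omega_{1,2}$. With $\omega(z)=z^2$ we have $c_1=0$ and $c_2=1$, so $\sigma_3(f)(0)$ equals $4$, $4$ and $3$ in the three classes. With $\omega(z)=z^3$ we have $c_1=c_2=0$ and $c_3=1$. The coefficient formulas then give $\sigma_4(f)(0)$ equal to $12$ for $\mathcal{P}^*$, $\tfrac{32}{3}$ for $\mathcal{S}^*_{\mathrm{car}}$ and $8$ for $\mathcal{S}^*_\rho$. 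I expect this sharpness step to be the main obstacle, and it is where I would recheck the arithmetic first. It must be confirmed that equality in Lemma~\ref{lemma2} in region $D_6$, where $\Phi=|\nu|$, is attained at $\omega(z)=z$, which gives $c_1=1$ and $c_2=c_3=0$. That choice yields $|c_3+\mu c_1c_2+\nu c_1^3|=|\nu|$.

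Taking $\omega(z)=z$ gives $|\sigma_4(f)(0)|=12\cdot\tfrac54=15$ for $\mathcal{P}^*$. So the value $5/8$ for $|\omega_{1,2}|$ is attained, and it lies within the stated $2/3$. For the other two classes, $\omega(z)=z$ gives equality in $\tfrac{56}{81}$ and $\tfrac{11}{18}$. Thus the proof reduces to the division above together with identifying the correct extremal Schwarz function in each case.
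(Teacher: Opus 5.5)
Your reduction is the paper's: with the paper's definitions \eqref{w11_new}--\eqref{w12_new}, identity \eqref{pw2} is immediate, and one divides the bounds of Theorems~\ref{T1}--\ref{T3} by $6$ and $24$. Your extremal analysis is more accurate than the paper's. The choice $\omega(z)=z^3$ gives only $12,\ \tfrac{32}{3},\ 8$ for $\sigma_4(f)(0)$. The choice $\omega(z)=z$, where the maximum $|\nu|$ of Lemma~\ref{lemma2} on $D_6$ is attained, gives $15,\ \tfrac{448}{27},\ \tfrac{44}{3}$.

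The gap is in item (i). The statement asserts that $|\omega_{1,2}|\le\tfrac23$ is sharp on $\mathcal{P}^*$. Your last paragraph treats ``$\tfrac58$ is attained and lies within $\tfrac23$'' as if that settled it, but it does not. You have shown $|\omega_{1,2}|\le\tfrac{15}{24}=\tfrac58<\tfrac23$ for every $f\in\mathcal{P}^*$, so $\tfrac23$ is neither attained nor approached. That part of the statement is therefore false and cannot be proved. The sharp constant is $\tfrac58$, attained for $f'(z)=e^z(1+\sin z)$. You should say this explicitly rather than present your argument as a proof of the theorem as stated. The paper's proof reaches $\tfrac23$ only by writing $\tfrac{16}{24}$, which contradicts the value $15=12\cdot\tfrac54$ established in Theorem~\ref{T1}. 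The same $16$ reappears in Theorem~\ref{T6} and in the multiplication-operator theorem.

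There is also a smaller error. You claim all three classes lie in $\mathcal{S}$ because the subordinating function has positive real part. This is false for $\mathcal{P}^*$: $\Re\bigl(e^{z}(1+\sin z)\bigr)\approx-0.03$ at $z=-0.8+0.55i\in\mathbb{D}$. Since the theorem is stated for $f\in\mathcal{S}$, sharpness in (i) requires checking univalence of the extremal functions directly.
\begin{itemize}
\item For $\omega(z)=z$ the check is easy. Write $f'(z)=g'(z)(1+\sin z)$ with $g(z)=e^z-1$, which is convex since $1+zg''/g'=1+z$. Also $\Re(1+\sin z)>0$ in $\mathbb{D}$, so $f$ is close-to-convex and hence univalent.
\item For the $\omega_{1,1}$-extremal $\omega(z)=z^2$, a separate argument is needed.
\end{itemize}
For $\mathcal{S}^*_{\mathrm{car}}$ and $\mathcal{S}^*_\rho$ your remark is correct.
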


\begin{proof}
From the relations in \eqref{pw2}, we have
\[
|\omega_{1,1}| = \frac{1}{6}|\sigma_3(f)(0)|, \qquad |\omega_{1,2}| = \frac{1}{24}|\sigma_4(f)(0)|.
\]
Applying the sharp bounds derived in Theorems~\ref{T1}, \ref{T2}, and \ref{T3} yields the desired inequalities for each subclass. Specifically, for $f \in \mathcal{P}^{*}$, the functional evaluations produce $|\omega_{1,2}| \le \frac{16}{24} = \frac{2}{3}$. The sharpness follows directly from the choice of the corresponding canonical cyclic extremal maps for the underlying Schippers--Schwarzian functionals.
\end{proof}

\section{Further Invariant-Theoretic and Operator-Theoretic Consequences}

The sharp bounds established for the initial higher-order Schippers--Schwarzian derivatives admit meaningful applications within localized invariant theory and operator-theoretic contexts (cf. \cite{HuSrivastavaZhang, Schippers}). In particular, they yield explicit quantitative estimates for truncated Grunsky forms, local invariant profiles within Bers-type embeddings, and localized multiplication operators acting on weighted Bergman spaces.

\subsection{Localized Grunsky Quadratic Forms and Truncation Distortion}

Let $\Gamma = \partial f(\mathbb{D})$ denote the boundary of the image domain under a univalent mapping $f \in \mathcal{S}$. In the classical theory of Fredholm eigenvalues and quasiconformal reflection, the reciprocal of the minimal positive Fredholm eigenvalue $\lambda_1$ can be expressed in terms of the global Grunsky operator norm via the relation (see \cite{Duren, Nehari}):
\begin{equation}\label{pw3}
\kappa = \frac{1}{\lambda_1} = \sup_{\substack{x \in \ell^2 \\ x \neq 0}} \frac{\left| \sum_{m=1}^{\infty} \sum_{n=1}^{\infty} \omega_{m,n} x_m x_n \right|}{\sum_{n=1}^{\infty} \frac{|x_n|^2}{n}}.
\end{equation}

While determining the global operator norm in \eqref{pw3} is generally non-trivial, the initial Grunsky coefficients provide sharp structural information regarding the first principal block of the infinite Grunsky matrix.

\begin{theo}\label{T5}
Let $\mathbf{G}_2 = (\omega_{m,n})_{1 \le m,n \le 2}$ be the principal $2 \times 2$ block of the Grunsky matrix. Under the assumptions of Theorem~\ref{T4}, the localized quadratic form
\[
\mathcal{Q}_2(x) = \left| \sum_{m,n=1}^2 \omega_{m,n} x_m x_n \right|
\]
satisfies the inequality
\[
\mathcal{Q}_2(1,0) \le \delta,
\]
where the bound $\delta$ is determined by the respective geometric target family according to:
\[
\delta = \begin{cases}
\dfrac{2}{3}, & f \in \mathcal{P}^{*}, \\[2mm]
\dfrac{2}{3}, & f \in \mathcal{S}_{\mathrm{car}}^*, \\[2mm]
\dfrac{1}{2}, & f \in \mathcal{S}_\rho^*.
\end{cases}
\]
Moreover, these estimates are sharp.
\end{theo}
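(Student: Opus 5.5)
The plan is to reduce the statement to the bounds for $\omega_{1,1}$ already obtained in Theorem~\ref{T4}. For $x=(1,0)$ the quadratic form has a single surviving term:
\[
\sum_{m,n=1}^{2}\omega_{m,n}x_mx_n=\omega_{1,1}x_1^2=\omega_{1,1}.
\]
The entries $\omega_{1,2}$, $\omega_{2,1}$ and $\omega_{2,2}$ all carry a factor $x_2=0$. Hence $\mathcal{Q}_2(1,0)=|\omega_{1,1}|=|a_3-a_2^2|$ by \eqref{w11_new}, and by \eqref{pw2} this equals $\frac{1}{6}|\sigma_3(f)(0)|$.

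Next I would invoke the first inequality of Theorems~\ref{T1}, \ref{T3} and \ref{T2}. These give $|\sigma_3(f)(0)|\le 4$ for $\mathcal{P}^*$, $|\sigma_3(f)(0)|\le 4$ for $\mathcal{S}^*_{\mathrm{car}}$, and $|\sigma_3(f)(0)|\le 3$ for $\mathcal{S}^*_\rho$. Dividing by $6$ yields $\delta=\frac{2}{3},\frac{2}{3},\frac{1}{2}$, exactly as stated; this is part (i), (ii), (iii) of Theorem~\ref{T4} for $\omega_{1,1}$.

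For sharpness I would reuse the extremal choice $\omega(z)=z^2$ in each class, so $c_1=0$ and $c_2=1$. I would check directly from the coefficient relations that the bound is attained:
\begin{align*}
\mathcal{P}^*:&\quad a_2=0,\; a_3=\tfrac{2}{3}, \quad\text{so } |a_3-a_2^2|=\tfrac{2}{3};\\
\mathcal{S}^*_{\mathrm{car}}:&\quad a_2=0,\; a_3=\tfrac{2}{3}, \quad\text{so } |a_3-a_2^2|=\tfrac{2}{3};\\
\mathcal{S}^*_{\rho}:&\quad a_2=0,\; a_3=\tfrac{1}{2}, \quad\text{so } |a_3-a_2^2|=\tfrac{1}{2}.
\end{align*}
These come from \eqref{pd12}, \eqref{zp21} and \eqref{zp32} respectively.

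The only genuine point to verify, and the one I expect to be the main obstacle, is that the extremal functions actually lie in $\mathcal{S}$, since the Grunsky coefficients are defined for univalent $f$. The classes $\mathcal{S}^*_{\mathrm{car}}$ and $\mathcal{S}^*_\rho$ are subclasses of $\mathcal{S}^*$, because their subordinating functions have positive real part on $\mathbb{D}$, so these extremals are univalent. For $\mathcal{P}^*$ I would use the fact that $\Re f'>0$ on $\mathbb{D}$, which holds for the image domain of $e^z(1+\sin z)$ as in \cite{TAH}. Univalence then follows from the Noshiro--Warschawski theorem. With that, the estimates in Theorem~\ref{T5} are sharp.
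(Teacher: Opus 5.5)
Your reduction is the paper's proof. With $x=(1,0)$ only $\omega_{1,1}$ survives, so $\mathcal{Q}_2(1,0)=|\omega_{1,1}|=\frac16|\sigma_3(f)(0)|$, and the $\sigma_3$-bounds of Theorems~\ref{T1}, \ref{T2}, \ref{T3} give $\delta$. Your values $a_3=\frac23,\frac23,\frac12$ for $\omega(z)=z^2$ are correct. Your univalence argument for $\mathcal{S}^*_{\mathrm{car}}$ and $\mathcal{S}^*_\rho$ is also fine: $\Re\bigl(1+\frac43e^{i\theta}+\frac23e^{2i\theta}\bigr)=\frac13(1+2\cos\theta)^2\ge 0$ on the circle, and $|\Re\sinh^{-1}z|<\log(1+\sqrt2)<1$ in $\mathbb{D}$.

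\textbf{The gap is the univalence step you flagged for $\mathcal{P}^*$.} It is false that $e^z(1+\sin z)$ has positive real part in $\mathbb{D}$. At $z=0.9i$ its value is $e^{0.9i}(1+i\sinh 0.9)$, whose real part is $\cos 0.9-\sin 0.9\,\sinh 0.9\approx-0.18$. So for $f_0'(z)=e^{z^2}(1+\sin z^2)$ one has $\Re f_0'<0$ at $z=\sqrt{0.9}\,e^{i\pi/4}\in\mathbb{D}$, and Noshiro--Warschawski cannot be invoked. No rotated version $\Re(e^{i\alpha}f_0')>0$ works either. The image domain is symmetric about $\mathbb{R}$ and contains $1$, so lying in a half-plane bounded by a line through $0$ would force it into $\{\Re w>0\}$.

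This matters for sharpness. From $|4c_2-3c_1^2|\le 4|c_2|+3|c_1|^2\le 4-|c_1|^2$, equality forces $c_1=0$ and $|c_2|=1$, i.e.\ $\omega(z)=e^{i\theta}z^2$. Hence every extremal is a rotation of $f_0(z)=\int_0^z e^{t^2}(1+\sin t^2)\,dt$. The value $\delta=\frac23$ is therefore attained in $\mathcal{S}\cap\mathcal{P}^*$, which is what the hypotheses of Theorem~\ref{T4} require, only if $f_0$ is univalent.

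The paper simply asserts sharpness without checking this, so you are not departing from it, but the justification you give is invalid and must be replaced. A workable direct route: $f_0$ is odd with real coefficients, so its image of the circle is symmetric in both axes. It therefore suffices to show that $\psi\mapsto f_0(e^{i\psi})$, $0<\psi<\pi/2$, is an injective arc in the open first quadrant, and then apply the Darboux--Picard theorem. A numerical check suggests this holds, but barely: $\Re f_0(e^{i\psi})$ drops to about $0.013$ near $\psi\approx 62^\circ$. So this needs a genuine estimate, not an appeal to the shape of the image domain.
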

\begin{proof}
Choosing the normalized test sequence $x = (1, 0) \in \ell^2$, it follows immediately that
\[
\sum_{m,n=1}^2 \omega_{m,n} x_m x_n = \omega_{1,1} \qquad \text{and} \qquad \sum_{n=1}^2 \frac{|x_n|^2}{n} = 1.
\]
Consequently, the quadratic form localized at this sequence is given by
\[
\mathcal{Q}_2(1,0) = |\omega_{1,1}| = \frac{1}{6}|\sigma_3(f)(0)|.
\]
The assertion now follows directly by applying the sharp upper bounds for $|\sigma_3(f)(0)|$ established in Section 3.
\end{proof}

The inequalities codified in Theorem~\ref{T5} indicate that under the petal-shaped structural constraint ($\mathcal{S}^*_{\rho}$), the initial Grunsky metric satisfies a more restrictive localized bound ($\delta = 1/2$) than under the exponential-sine or cardioid-type target geometries ($\delta = 2/3$). This behavior explicitly demonstrates how the smooth local growth bounds of the $\sinh^{-1}(z)$ subordinant restrict early coefficient deformations.

\subsection{Local Invariant Profiles inside Bers--Teichm\"uller Embeddings}

A univalent function $f \in \mathcal{S}$ belongs to the Weil--Petersson class $\mathcal{T}_{WP}$ of the universal Teichm\"uller space if and only if its classical Schwarzian derivative is square-integrable with respect to the hyperbolic area element over the unit disk (cf. \cite{Schippers, Tamanoi}):
\begin{equation}
\|S_f\|_{WP}^2 = \iint_{\mathbb{D}} |S_f(z)|^2 (1-|z|^2)^2 \, dA(z) < \infty.
\end{equation}

As noted by Schippers \cite{Schippers}, the higher-order Schwarzian derivatives $\sigma_n(f)$ act as generalized complex coordinates under holomorphic embeddings (see also \cite{Harmelin, Tamanoi}). At the origin $z=0$, the quantities $|\sigma_3(f)(0)|^2$ and $|\sigma_4(f)(0)|^2$ function as initial Taylor invariants reflecting the local density of these higher-order Bers projections.

We now evaluate the truncated local invariant profile, defined by $\mathcal{E}_4(f) = \frac{1}{36}|\sigma_3(f)(0)|^2 + \frac{1}{576}|\sigma_4(f)(0)|^2$, across each subclass.

\begin{theo}\label{T6}
Let $f \in \mathcal{A}$. The upper bounds for the truncated local invariant profile $\mathcal{E}_4(f)$ at the origin are given by:
\begin{enumerate}[{\rm(i)}]
\item If $f \in \mathcal{P}^{*}$, then $\mathcal{E}_4(f) \le \frac{8}{9} \approx 0.8889$.
\item If $f \in \mathcal{S}^*_{\mathrm{car}}$, then $\mathcal{E}_4(f) \le \frac{6052}{6561} \approx 0.9224$.
\item If $f \in \mathcal{S}^*_{\rho}$, then $\mathcal{E}_4(f) \le \frac{101}{162} \approx 0.6235$.
\end{enumerate}
The bounds in {\rm(i)} and {\rm(iii)} are sharp, whereas the bound in {\rm(ii)} constitutes an absolute upper bound.
\end{theo}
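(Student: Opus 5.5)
By \eqref{pw2}, $\frac{1}{36}|\sigma_3(f)(0)|^2=|\omega_{1,1}|^2$ and $\frac{1}{576}|\sigma_4(f)(0)|^2=|\omega_{1,2}|^2$. Hence for every $f\in\mathcal{A}$ in the relevant classes
\[
\mathcal{E}_4(f)=|\omega_{1,1}|^2+|\omega_{1,2}|^2 .
\]
The upper bounds will therefore come from bounding each term separately by Theorem~\ref{T4}, or equivalently by Theorems~\ref{T1}--\ref{T3}, and adding the squares.

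\emph{Upper bounds.} For $\mathcal{S}^*_{\mathrm{car}}$ this gives $\frac{4}{9}+\left(\frac{56}{81}\right)^2=\frac{2916+3136}{6561}=\frac{6052}{6561}$. For $\mathcal{S}^*_\rho$ it gives $\frac14+\frac{121}{324}=\frac{202}{324}=\frac{101}{162}$.

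For $\mathcal{P}^*$, using the values listed in Theorem~\ref{T4}(i), namely $\frac23$ and $\frac23$, gives $\frac{8}{9}$. I note, however, that Theorem~\ref{T1} actually yields the smaller bound $|\omega_{1,2}|\le \frac{15}{24}=\frac58$. That gives the stronger estimate $\frac49+\frac{25}{64}$, which is still below $\frac89$. So the stated inequality (i) holds either way.

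\emph{Sharpness.} This is the main obstacle. The extremal Schwarz functions for the two terms differ: $\omega(z)=z^2$ is extremal for $\sigma_3$, and $\omega(z)=z^3$ is extremal for $\sigma_4$. A single function attaining the sum of the two maxima would have to make both functionals extremal simultaneously.

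To test this, I would write both functionals in terms of $(c_1,c_2,c_3)$ as in the proofs of Theorems~\ref{T1}--\ref{T3}. For $\mathcal{S}^*_\rho$ these are $\sigma_3=3(c_2-c_1^2)$ and $\sigma_4=8\left(c_3-3c_1c_2+\frac{11}{6}c_1^3\right)$. I would then maximize $|\omega_{1,1}|^2+|\omega_{1,2}|^2$ over the coefficient body of $\Omega$ using the Schur parametrization
\[
c_2=(1-|c_1|^2)\zeta_1,\qquad c_3=(1-|c_1|^2)(1-|\zeta_1|^2)\zeta_2-(1-|c_1|^2)\bar c_1\zeta_1^2 .
\]

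First I would try the boundary cases. With $c_1=0$ one gets $|c_3|\le 1-|c_2|^2$, so the profile is at most $\frac14 t^2+\frac{121}{324}(1-t^2)^2$ with $t=|c_2|$, and this is strictly less than $\frac{101}{162}$. With $|c_1|=1$, i.e. $\omega(z)=e^{i\theta}z$, one gets $\sigma_3=-3e^{2i\theta}$ and $\sigma_4=-\frac{28}{3}e^{3i\theta}$, so the profile is $\frac14+\left(\frac{7}{18}\right)^2$, again smaller.

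Interior values of $c_1$ would then have to be checked by a calculus argument in $(|c_1|,|\zeta_1|,|\zeta_2|)$. If that maximum does not reach $\frac{101}{162}$, the honest conclusion is that (iii) is only an upper bound, just as (ii) is stated to be. In that case I would record the computed maximum of the two-variable problem as the sharp value in place of claiming equality. The same procedure applies to (i): if no single $\omega$ realizes both extremes, the claim of sharpness must be replaced by this finite-dimensional optimization.
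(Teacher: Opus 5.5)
\textbf{Upper bounds.} Your upper bounds are obtained exactly as in the paper: you write $\mathcal{E}_4(f)=|\omega_{1,1}|^2+|\omega_{1,2}|^2$ and add the squares of the individual bounds. You also correctly notice that the paper's proof of (i) uses $|\sigma_4(f)(0)|=16$, which contradicts the bound $15$ of Theorem~\ref{T1}.

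\textbf{Sharpness of (iii).} Here your proposal contains a genuine error. For $\omega(z)=e^{i\theta}z$ one has $c_2=c_3=0$, so $\sigma_4(f)(0)=8\cdot\frac{11}{6}e^{3i\theta}=\frac{44}{3}e^{3i\theta}$, not $-\frac{28}{3}e^{3i\theta}$; you evidently evaluated $-3c_1c_2$ as $-3c_1^3$. Together with $\sigma_3(f)(0)=-3e^{2i\theta}$ this gives $\mathcal{E}_4(f)=\frac14+\frac{121}{324}=\frac{101}{162}$. So (iii) \emph{is} attained, by the function with $zf'(z)/f(z)=1+\sinh^{-1}z$, and your proposed downgrading of (iii) is wrong. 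The false premise behind your ``main obstacle'' is that $\omega(z)=z^3$ is extremal for $\sigma_4$. That slip is copied from the proofs of Theorems~\ref{T1}--\ref{T3}, and $\omega(z)=z^3$ only gives $|\sigma_4(f)(0)|=8$. In $D_6$, where $\Phi(\mu,\nu)=|\nu|$, equality occurs at $|c_1|=1$. Since $|\sigma_3(f)(0)|=3|c_2-c_1^2|$ also reaches $3$ there, the single Schwarz function $\omega(z)=z$ realizes both maxima. This is the alignment the paper's proof asserts for (iii), and no optimization over Schur parameters is needed.

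\textbf{Sharpness of (i).} Your own estimate already settles this case, but you do not draw the conclusion. Since $\mathcal{E}_4(f)\le\frac49+\frac{25}{64}<\frac89$ on $\mathcal{P}^*$, the value $\frac89$ is never attained. Hence the sharpness asserted in (i) is false and cannot be proved by any argument; the paper's argument for it rests on the value $16$ and on a simultaneous-attainment claim that fails. You should state this outright rather than defer it to a finite-dimensional optimization. Even $\frac49+\frac{25}{64}$ is not attained: from $|4c_2-3c_1^2|\le 4-|c_1|^2$, equality $|\sigma_3(f)(0)|=4$ forces $c_1=0$, and then $|\sigma_4(f)(0)|=12|c_3|\le 12<15$.

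\textbf{Case (ii).} The same reasoning confirms that (ii) is not sharp. Here $|\sigma_3(f)(0)|\le 4-\frac23|c_1|^2$, and when $c_1=0$ one has $|\sigma_4(f)(0)|\le\frac{32}{3}<\frac{448}{27}$.

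\textbf{Minor point.} In your $c_1=0$ check for (iii), the coefficient of $(1-t^2)^2$ should be $\frac19$, since $\sigma_4(f)(0)=8c_3$ there, not $\frac{121}{324}$. The conclusion of that check is unaffected.
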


\begin{proof}
We evaluate the functional profile by directly implementing the independent sharp bounds derived in Section 3.
\begin{enumerate}[{\rm(i)}]
\item For $\mathcal{P}^{*}$, the independent extremal functions maximizing the modules $|\sigma_3(f)(0)|$ and $|\sigma_4(f)(0)|$ are structurally consistent and achieved simultaneously via a simple rotation of the canonical Schwarz function. Substituting the sharp values $|\sigma_3(f)(0)| = 4$ and $|\sigma_4(f)(0)| = 16$ yields:
\[
\mathcal{E}_4(f) \le \frac{16}{36} + \frac{256}{576} = \frac{4}{9} + \frac{4}{9} = \frac{8}{9}.
\]
\item For $\mathcal{S}^*_{\mathrm{car}}$, applying the independent sharp bounds $|\sigma_3(f)(0)| \le 4$ and $|\sigma_4(f)(0)| \le \frac{448}{27}$ via the triangle inequality yields the following upper bound:
\[
\mathcal{E}_4(f) \le \frac{16}{36} + \frac{1}{576}\left(\frac{448}{27}\right)^2 = \frac{4}{9} + \frac{200704}{420048} = \frac{4}{9} + \frac{3136}{6561} = \frac{2916 + 3136}{6561} = \frac{6052}{6561}.
\]
\item For $\mathcal{S}^*_{\rho}$, the independent extremal configurations align at the boundary parameter. Substituting $|\sigma_3(f)(0)| = 3$ and $|\sigma_4(f)(0)| = \frac{44}{3}$ produces:
\[
\mathcal{E}_4(f) \le \frac{9}{36} + \frac{1}{576}\left(\frac{44}{3}\right)^2 = \frac{1}{4} + \frac{1936}{5184} = \frac{1}{4} + \frac{121}{324} = \frac{81 + 121}{324} = \frac{202}{324} = \frac{101}{162}.
\]
\end{enumerate}
This completes the verification.
\end{proof}

\subsection{Pointwise Evaluation Bounds for Multiplication Operators}

\begin{theo}
Let $M_{\sigma_4(f)}$ be the fourth-order Schippers--Schwarzian multiplication operator. For any analytic function $g \in A_\alpha^p$ normalized such that $|g(0)|=1$, the localized pointwise evaluation of the multiplier output satisfies the inequality:
\begin{equation}
|M_{\sigma_4(f)}(g)(0)| \le K,
\end{equation}
where $K = 16$ for $\mathcal{P}^{*}$, $K = \frac{448}{27}$ for $\mathcal{S}^*_{\mathrm{car}}$, and $K = \frac{44}{3}$ for $\mathcal{S}^*_{\rho}$. These bounds are sharp.
\end{theo}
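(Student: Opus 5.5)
The plan is to reduce the statement to the sharp bounds on $|\sigma_4(f)(0)|$ already established in Theorems~\ref{T1}, \ref{T2} and \ref{T3}. I read $M_{\sigma_4(f)}$ as the operator acting on $A^p_\alpha$ by $M_{\sigma_4(f)}(g)(z)=\sigma_4(f)(z)\,g(z)$. Evaluating at the origin and using $|g(0)|=1$ gives
\[
|M_{\sigma_4(f)}(g)(0)| = |\sigma_4(f)(0)|\,|g(0)| = |\sigma_4(f)(0)|.
\]
The only analytic input is therefore the value $\sigma_4(f)(0)=24(a_4-3a_2a_3+2a_2^3)$ from \eqref{sg4}. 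The facts that $g\in A^p_\alpha$ and that $\sigma_4(f)$ is a bounded multiplier play no role beyond making the expression meaningful.

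Next I would quote the three estimates. Theorem~\ref{T2} gives $K=\frac{44}{3}$ for $\mathcal{S}^*_\rho$, and Theorem~\ref{T3} gives $K=\frac{448}{27}$ for $\mathcal{S}^*_{\mathrm{car}}$. For sharpness I take $g\equiv 1$, which lies in every $A^p_\alpha$ and has $|g(0)|=1$. Then $|M_{\sigma_4(f)}(g)(0)|=|\sigma_4(f)(0)|$, so equality is attained by the extremal function of the corresponding theorem.

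There is one point to check when identifying the extremal functions. In each case the Prokhorov--Szynal region is $D_6$, where $\Phi=|\nu|$. This maximum comes from the term $\nu c_1^3$, so it is attained at $\omega(z)=z$ (with $c_1=1$, $c_2=c_3=0$), not at $\omega(z)=z^3$. For instance, in $\mathcal{P}^*$ the choice $\omega(z)=z^3$ gives only $12$, whereas $\omega(z)=z$ gives $12\cdot\frac54=15$. I would therefore state the equality cases using $\omega(z)=z$, i.e.\ $f'(z)=e^z(1+\sin z)$, $zf'/f=1+\sinh^{-1}z$ and $zf'/f=1+\frac43z+\frac23z^2$ respectively. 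For each I would verify directly that $|\sigma_4(f)(0)|$ equals the stated constant.

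The main obstacle is the constant for $\mathcal{P}^*$. Theorem~\ref{T1} proves $|\sigma_4(f)(0)|\le 15$, and I re-checked the expansion $f'=1+2\omega+\frac32\omega^2+\frac12\omega^3+\cdots$, which confirms $\sigma_4(f)(0)=12(c_3-\frac52c_1c_2+\frac54c_1^3)$. Since $(-\frac52,\frac54)\in D_6$, the sharp bound is $15$, attained at $\omega(z)=z$. Consequently the inequality with $K=16$ holds trivially, but it cannot be sharp. The value $16$ appears to come from the slip in the proof of Theorem~\ref{T6}. My proof would therefore establish the inequality as stated and prove sharpness with $K=15$ for $\mathcal{P}^*$, noting the correction explicitly.
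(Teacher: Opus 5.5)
Your reduction is the paper's argument: evaluate at the origin, use $|g(0)|=1$ to get $|M_{\sigma_4(f)}(g)(0)|=|\sigma_4(f)(0)|$, and quote Theorems~\ref{T1}--\ref{T3}; your test function $g\equiv 1$ just makes the paper's implicit sharpness step concrete. Your corrections are also right: substituting Theorem~\ref{T1} gives $K=15$ for $\mathcal{P}^{*}$, so $K=16$ (the same slip as in the proofs of Theorems~\ref{T4} and~\ref{T6}) is valid but not sharp, and since $\Phi=|\nu|$ on $D_6$ the extremal Schwarz function in every class is $\omega(z)=z$, whereas the paper's $\omega(z)=z^3$ yields only $12$ for $\mathcal{P}^{*}$, $\frac{32}{3}$ for $\mathcal{S}^*_{\mathrm{car}}$ and $8$ for $\mathcal{S}^*_{\rho}$.
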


\begin{proof}
Direct pointwise evaluation at the origin yields $M_{\sigma_4(f)}(g)(0) = \sigma_4(f)(0) \cdot g(0)$. Taking the absolute value and applying the prescribed normalization constraint $|g(0)| = 1$, we obtain:
\[
|M_{\sigma_4(f)}(g)(0)| = |\sigma_4(f)(0)| \cdot |g(0)| = |\sigma_4(f)(0)|.
\]
The localized bounds follow directly by substituting the sharp parameter maxima for $|\sigma_4(f)(0)|$ established in Theorems \ref{T1}, \ref{T2}, and \ref{T3}.
\end{proof}

\section*{Declarations}
\subsection*{Funding}
The first author acknowledges financial support from the Council of Scientific and Industrial Research (CSIR), New Delhi, India, under Grant Nos. 09/1224(16975)/2023-EMR-I.
\subsection*{Data Availability Statement}
Data sharing is not applicable to this article as no datasets were generated or analyzed during the current study.
\subsection*{Conflict of Interest}
The authors declare that they have no conflict of interest.


\begin{thebibliography}{99}
\bibitem{Arora-Kumar-2022}
K.~Arora and S.~S.~Kumar,
Starlike functions associated with a petal-shaped domain,
\textit{Bull. Korean Math. Soc.} \textbf{59}(4) (2022), 993-1010.

\bibitem{BrannanKirwan}
D.~A.~Brannan and W.~E.~Kirwan,
On some classes of bounded univalent functions,
\textit{J. London Math. Soc.} (2) \textbf{1} (1969), 431-443.

\bibitem{C12}
N.~E.~Cho, B.~Kowalczyk and A.~Lecko,
Sharp bounds of some coefficient functionals over the class of functions convex in the direction of the imaginary axis,
\textit{Bull. Aust. Math. Soc.} \textbf{100} (2019), 86-96.

\bibitem{CKLS}
N.~E.~Cho, B.~Kowalczyk, A.~Lecko and B.~Smiarowska,
On the fourth and fifth coefficients in the Carath\'eodory class,
\textit{Filomat} \textbf{34}(6) (2020), 2061-2072.

\bibitem{ChoKumarRavichandran}
N.~E.~Cho, V.~Kumar and V.~Ravichandran,
Sharp bounds on the higher-order Schwarzian derivatives for Janowski classes,
\textit{Symmetry} \textbf{10}(8) (2018), Article~348.

\bibitem{CKS1}
J.~H.~Choi, Y.~C.~Kim and T.~Sugawa,
A general approach to the Fekete-Szeg\"o problem,
\textit{J. Math. Soc. Japan} \textbf{59} (2007), 707-727.

\bibitem{DorffSzynal}
M.~Dorff and J.~Szynal,
Higher-order Schwarzian derivatives of univalent functions,
\textit{Trans. Petrozavodsk State Univ. Ser. Math.} \textbf{15} (2008), 7-11.

\bibitem{Duren}
P.~L.~Duren,
\textit{Univalent Functions},
Springer-Verlag, New York, 1983.

\bibitem{Harmelin}
R.~Harmelin,
Aharonov invariants and univalent functions,
\textit{Israel J. Math.} \textbf{43}(3) (1982), 244-254.

\bibitem{HuFanWangSrivastava}
Z.~Hu, J.~Fan, X.~Wang and H.~M.~Srivastava,
A study of the higher-order Schwarzian derivatives of Hirotaka Tamanoi,
\textit{Math. Slovaca} \textbf{73}(4) (2023), 921-936.

\bibitem{HuSrivastavaZhang}
Z.~Hu, H.~M.~Srivastava and Y.~Zhang,
Estimates on Schippers' higher-order Schwarzian derivatives of a subclass of univalent analytic functions,
\textit{Miskolc Math. Notes} \textbf{26}(2) (2025), 867-875.

\bibitem{HuWangFan}
Z.~Hu, X.~Wang and J.~Fan,
Estimate for the Schwarzian derivative of certain close-to-convex functions,
\textit{AIMS Math.} \textbf{6} (2021), 10778-10788.

\bibitem{KanasSugawa}
S.~Kanas and T.~Sugawa,
Sharp norm estimate of the Schwarzian derivative for a class of convex functions,
\textit{Ann. Polon. Math.} \textbf{101}(1) (2011), 75-86.

\bibitem{Kulshrestha}
P.~K.~Kulshrestha,
Coefficients for $\alpha$-convex univalent functions,
\textit{Bull. Amer. Math. Soc.} \textbf{80} (1974), 341-342.

\bibitem{KumarChoRavichandranSrivastava}
V.~Kumar, N.~E.~Cho, V.~Ravichandran and H.~M.~Srivastava,
Sharp coefficient bounds for starlike functions associated with the Bell numbers,
\textit{Math. Slovaca} \textbf{69}(5) (2019), 1053-1064.

\bibitem{KumarVerma}
S.~S.~Kumar and N.~Verma,
On estimation of Hankel determinants for certain classes of starlike functions,
\textit{Filomat} \textbf{38} (2024), 1435--1448.

\bibitem{LiberaZlotkiewicz1982}
R.~J.~Libera and E.~J.~Zlotkiewicz,
Early coefficients of the inverse of a regular convex function,
\textit{Proc. Amer. Math. Soc.} \textbf{85}(2) (1982), 225-230.

\bibitem{LiberaZlotkiewicz1983}
R.~J.~Libera and E.~J.~Zlotkiewicz,
Coefficient bounds for the inverse of a function with derivative in $P$,
\textit{Proc. Amer. Math. Soc.} \textbf{87}(2) (1983), 251-257.

\bibitem{MM}
W.~C.~Ma and D.~Minda,
A unified treatment of some special classes of univalent functions,
in \textit{Proceedings of the Conference on Complex Analysis} (Tianjin, 1992),
Conf. Proc. Lecture Notes Anal., Vol.~1,
International Press, Cambridge, MA, 1994, pp.~157-169.

\bibitem{MillerMocanuReade1973}
S.~S.~Miller, P.~T.~Mocanu and M.~O.~Reade,
All $\alpha$-convex functions are univalent and starlike,
\textit{Proc. Amer. Math. Soc.} \textbf{37} (1973), 553-554.

\bibitem{MillerMocanuReade1972}
S.~S.~Miller, P.~T.~Mocanu and M.~O.~Reade,
All $\alpha$-convex functions are starlike,
\textit{Rev. Roumaine Math. Pures Appl.} \textbf{17} (1972), 1395-1397.

\bibitem{Mocanu}
P.~T.~Mocanu,
Une propri\'et\'e de convexit\'e g\'en\'eralis\'ee dans la th\'eorie de la repr\'esentation conforme,
\textit{Mathematica (Cluj)} \textbf{11}(34) (1969), 127-133.

\bibitem{Nehari}
Z.~Nehari,
The Schwarzian derivative and schlicht functions,
\textit{Bull. Amer. Math. Soc.} \textbf{55} (1949), 545-551.

\bibitem{Prokhorov}
D.~V.~Prokhorov and J.~Szynal,
Inverse coefficients for $(\alpha,\beta)$-convex functions,
\textit{Ann. Univ. Mariae Curie-Sklodowska Sect. A} \textbf{35} (1981), 125-143.

\bibitem{Schippers}
E.~Schippers,
Distortion theorems for higher-order Schwarzian derivatives of univalent functions,
\textit{Proc. Amer. Math. Soc.} \textbf{128}(11) (2000), 3241-3249.

\bibitem{SS}
K.~Sharma, N.~K.~Jain and V.~Ravichandran,
Starlike functions associated with a cardioid,
\textit{Afr. Mat.} \textbf{27}(5-6) (2016), 923-939.

\bibitem{SokolThomas}
J.~Sokol and D.~K.~Thomas,
The second Hankel determinant for $\alpha$-convex functions,
\textit{Lithuanian Math. J.} \textbf{58}(2) (2018), 212-218.

\bibitem{SrivastavaPrajapatiGochhayat}
H.~M.~Srivastava, A.~Prajapati and O.~Gochhayat,
Integral means and Yamashita's conjecture associated with the Janowski-type $(j,k)$-symmetric starlike functions,
\textit{Rev. R. Acad. Cienc. Exactas F\'is. Nat. Ser. A Mat. (RACSAM)} \textbf{116} (2022), Article~165.

\bibitem{Tamanoi}
H.~Tamanoi,
Higher Schwarzian operators and combinatorics of the Schwarzian derivative,
\textit{Math. Ann.} \textbf{305}(1) (1996), 127-151.

\bibitem{TAA}
H.~Tang, M.~Abbas, R.~K.~Alhefthi and M.~Arif,
Improvement on Hankel determinant bounds for specific holomorphic functions,
\textit{Acta Math. Sci. Ser. B (Engl. Ed.)} \textbf{46}(1) (2026), 39-61. DOI: \texttt{10.1007/s10473-026-0103-8}.

\bibitem{TAH}
H.~Tang, M.~Arif, M.~Haq \textit{et al.},
Fourth Hankel determinant problem based on certain analytic functions,
\textit{Symmetry} \textbf{14}(4) (2022), Article~663.

\bibitem{Todorov}
P.~G.~Todorov,
Explicit formulas for the coefficients of $\alpha$-convex functions, $\alpha \geq 0$,
\textit{Canad. J. Math.} \textbf{39}(4) (1987), 769-783.
\end{thebibliography}
\end{document}